 \newcommand{\unlike}{{\operatorname{unl}}}
 \newcommand{\tp}{\operatorname{tp}}
\newcommand{\ft}{{\operatorname{ft}}}
\newcommand{\trdeg}{\operatorname{tr.deg}}
\newcommand{\atyp}{\operatorname{atyp}}
\newcommand{\bB}{{\mathbf{B}}}
\newcommand{\bG}{{\mathbf{G}}}
\newcommand{\bH}{{\mathbf{H}}}
\newcommand{\CC}{\mathbb{C}}
\newcommand{\QQ}{\mathbb{Q}}
\newcommand{\RR}{\mathbb{R}}
\newcommand{\ZZ}{\mathbb{Z}}
\newcommand{\sS}{\mathsf{S}}
\renewcommand{\ft}{{\operatorname{ft}}}
\newcommand{\cA}{{\mathcal{A}}}
\newcommand{\cH}{{\mathcal{H}}}
\newcommand{\cM}{{\mathcal{M}}}
\newcommand{\cS}{{\mathcal{S}}}
 \newtheorem{thm}{Theorem}[section]
 \newtheorem{lem}[thm]{Lemma}
 \newtheorem{prop}[thm]{Proposition}
 \newtheorem{cor}[thm]{Corollary}
 \newtheorem{conj}[thm]{Conjecture}
 \newtheorem{fact}[thm]{Fact}
 \theoremstyle{definition}
 \newtheorem{Def}[thm]{Definition}
 \newtheorem{conv}[thm]{Convention}
\theoremstyle{remark}
\newtheorem{Rk}[thm]{Remark}
\begin{document}
 \title{Effective transcendental Zilber-Pink for 
 variations of Hodge structures}
 \author{Jonathan Pila}
 \address{University of Oxford \\
 Mathematical Institute \\
 Andrew Wiles Building \\
 Radcliffe Observatory Quarter \\
 Woodstock Rd \\
 Oxford OX2 6GG \\
 UK}
 \email{pila@maths.ox.ac.uk}
 \author{Thomas Scanlon}	 
 \thanks{T.S. is patially supported
  by NSF grant DMS-1800492.  
  T.S.'s visit to Oxford was 
  made possible by a Simon's 
  Fellowship and the hospitality
  of the Mathematical Institute of 
  the University of Oxford.}
 \address{University of California, Berkeley \\
 Department of Mathematics \\
 Evans Hall \\
 Berkeley, CA 94720-3840 \\
 USA}
 \email{scanlon@math.berkeley.edu}
 \begin{abstract}
 We prove function field versions of the 
 Zilber-Pink conjectures for varieties supporting
 a variation of Hodge structures.  A form of these
 results for Shimura varieties in the context of 
 unlikely intersections is the following.
 Let $S$ be a connected pure Shimura variety
 with a fixed quasiprojective embedding.  
 We show that there is an explicitly computable
 function $B$ of two natural number arguments 
 so that for any field extension $K$ of the complex
 numbers and Hodge generic irreducible proper subvariety 
 $X \subsetneq S_K$,  the 
 set of nonconstant points in the intersection of
 $X$ with the union of all special subvarieties
 of $X$ of dimension less than the codimension of 
 $X$ in $S$ is contained in a 
 proper subvariety of $X$ 
 of 
 degree bounded by $B(\operatorname{deg}(X),\dim(X))$.
 Our techniques are differential algebraic and rely on 
 Ax-Schanuel functional transcendence 
 theorems.  We use these results to show that the 
 differential equations associated with Shimura 
 varieties give new examples of minimal, and
 sometimes, strongly minimal, types with trivial
 forking geometry but non-$\aleph_0$-categorical
 induced structure.
\end{abstract}
\maketitle

\section{Introduction}
 On general grounds, if $Y$ and $Z$ are irreducible 
subvarieties of the smooth variety $X$, then 
for each component $U$ of $Y \cap Z$, we have 
$\dim(U) \geq \dim(Y) + \dim(Z) - \dim(X)$ and 
we \emph{expect} to have actual equality.  We say 
that $U$ is an \emph{atypical} component of the 
intersection if its dimension is strictly 
greater than what is expected.   In the case that 
$\dim(Y) + \dim(Z) < \dim(X)$, then with the 
expected dimension of the intersection being negative, 
we are saying that we do not expect $U$ to exist at all!
In this case, we say that $U$ is an \emph{unlikely} component. 
Specializing to the case that $X$ is a Shimura variety and 
$Y \subseteq X$ is a subvariety of $X$, we define the 
atypical locus in $Y$, $Y_{\atyp}$, to be the union of all 
atypical components of intersections $Y \cap Z$ with 
$Z \subseteq X$ being a special subvariety~\footnote{We recall the precise definitions
of the special and weakly special varieties in Section~\ref{conventions}.} 
and we define the unlikely locus, $Y_{\unlike}$, to be 
the union of the unlikely components of such intersections. 

The  Zilber-Pink conjecture, in Pink's formulation, restricted to this case of 
pure Shimura varieties predicts that,  if $Y$ is not contained in a proper special subvariety of $X$, then
$Y_{\unlike}$ is not Zariski dense in $Y$.  
One could consider an effective strengthening of the Zilber-Pink 
conjecture.  Fix a quasi-projective embedding of $X$ so that the notion
of the degree of a subvariety becomes meaningful.   For these effective
versions of Zilber-Pink, we would ask 
that if $Y \subseteq X$ is not contained in a proper 
special subvariety of $X$, then there is a proper 
subvariety $Z \subsetneq Y$ with 
$Y_\unlike \subseteq Z$ having $\deg(Z)$  
bounded by an explicitly computable
function of the degree and the dimension of $Y$.

In this note we prove a function field version of this effective Zilber-Pink  conjecture.  
Let $K$ be a field extension of the complex numbers.  
 We shall show that for $Y \subsetneq X_K$ a proper, irreducible subvariety of $X$
defined over $K$, not contained in a proper special variety, there is some subvariety 
$Z \subsetneq Y$ which contains all of the nonconstant points in $Y_\unlike$
having $\deg(Z)$ bounded by an explicitly computable function of the 
degree and dimension of $Y$.  Indeed, our methods 
apply more generally, for instance, to variations of Hodge structures.
In the special case of powers of the modular curve the result is
as follows (see Corollary 3.8).

\begin{cor}
\label{effective-ZPY}  
Let $n$ be a positive integer and $S=Y(1)^n$.
Then there is an explicit constant $C=C(n)$ so that for any 
natural number $\ell$ and  
any irreducible subvariety $X \subseteq S_K$
with $\dim(X) + \ell < n$,
there is a proper subvariety $Y \subsetneq X$ with   
$(X(K) \smallsetminus X(\CC)) \cap \cS_S^{[\ell]} \subseteq Y$ and
$\deg(Y) \leq C \deg(X)^{\dim(S)}$. 
\end{cor}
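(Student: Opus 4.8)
The plan is to obtain the statement by specializing to $S=Y(1)^n$ the differential-algebraic argument behind the function field Zilber--Pink theorem, the explicit $C(n)$ emerging from a Bézout estimate. First I would reduce to the case that $K$ is finitely generated over $\CC$, since $X$ and any point $p\in X(K)\smallsetminus X(\CC)$ are defined over such a subfield and it suffices to produce $Y$ there; I would then equip $K$ with commuting derivations $\partial_1,\dots,\partial_m$ whose common field of constants is exactly $\CC$. Next I would reduce, by passing to the smallest weakly special subvariety containing $X$ --- whose modular part is again a power $Y(1)^{n'}$ with $n'\le n$ --- to the case that $X$ is Hodge generic and is contained in no proper weakly special subvariety of $S$; a bound with exponent $n'$ in place of $\dim S$ still implies the asserted one since $\deg(X)\ge 1$.

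Then I would make explicit the differential-algebraic incarnation of $S$. The modular function $j$ satisfies the order three Schwarzian equation $\operatorname{Sch}(j)+R(j)\,(j')^2=0$, where $\operatorname{Sch}(f)=(f''/f')'-\tfrac12(f''/f')^2$ and $R(y)=\frac{y^2-1968\,y+2654208}{2\,y^2(y-1728)^2}$ has poles only at the elliptic points $y=0,1728$ and at the cusp. Clearing denominators and prolonging, this equation exhibits $Y(1)$, hence $S=Y(1)^n$ coordinatewise, as a finite dimensional differential algebraic variety over $(\CC,0)$: once $\partial p_i$ and $\partial^2 p_i$ are known, all higher $\partial$-derivatives of a coordinate $p_i$ of a $K$-point $p$ are determined; the vanishing $\partial p_i=0$ records that $p_i$ is a $\partial$-constant; and the $\CC$-points of $S$ are precisely the points with all derivatives vanishing. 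Thus a point of $X(K)\smallsetminus X(\CC)$ is one lying off this constant locus. I would also recall that the special (resp.\ weakly special) subvarieties of $S$ are components of loci cut out by modular equations $\Phi_N(x_i,x_j)=0$ together with conditions $x_i=c$ with $c$ a singular modulus (resp.\ an arbitrary element of $\CC$), whose defining ideals prolong compatibly with this structure.

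The key input is the modular Ax--Schanuel theorem with derivatives (Pila--Tsimerman), in differential form: for a $K$-point $p$ of $S$, writing $\mathcal F=\CC\langle p\rangle$ for the differential field it generates, $\trdeg_\CC\mathcal F$ cannot be smaller than a fixed multiple of $\dim W_p$, where $W_p\subseteq S$ is the smallest weakly special subvariety through $p$, unless $p$ satisfies a further modular relation among its coordinates; equivalently, the prolonged point of $p$ lies in the prolongation of $W_p$. Granting this, take $p\in(X(K)\smallsetminus X(\CC))\cap\cS_S^{[\ell]}$, lying on a special subvariety $T$ with $\dim T\le\ell<n-\dim X$. Since $X$ is Hodge generic the component of $X\cap T$ through $p$ is atypical; comparing the Ax--Schanuel lower bound for $\trdeg_\CC\CC\langle p\rangle$ with the complementary upper bound coming from $p\in X$ and $\dim X<n$ shows that $W_p$ is a proper weakly special subvariety of $S$, and, since $X$ lies in no proper weakly special subvariety, $p$ lies in the proper subvariety $X\cap W_p$ of $X$. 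The step I expect to be the main obstacle is to bound the complexity of $W_p$ --- in particular the levels $N$ of the modular equations defining it --- in terms of $\deg(X)$ and $n$ alone, even though a priori $p$ may lie on special subvarieties $T$ with $\deg\Phi_N\to\infty$. This is exactly what functional transcendence supplies: a modular relation of any level cuts a bi-disc down to a one-dimensional geodesic, so the analytic statement is insensitive to the level, and the proof of Ax--Schanuel produces a $W_p$ whose degree is bounded polynomially in $\deg(X)$ and $n$; extracting this \emph{explicit} complexity bound, rather than mere finiteness, is where the effectivity resides.

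Finally I would pass to the degree bound. There are only finitely many combinatorial shapes of weakly special $W$ arising in this way (their number bounded in terms of $n$), and each is cut out in $S$ by at most $\dim S$ equations of degree bounded polynomially in $\deg(X)$ and $n$; Bézout then gives $\deg(X\cap W)\le C_0(n)\,\deg(X)^{\dim S}$. Letting $Y$ be the union of the resulting finitely many proper subvarieties of $X$ and enlarging the constant to absorb their number yields a proper $Y\subsetneq X$ with $(X(K)\smallsetminus X(\CC))\cap\cS_S^{[\ell]}\subseteq Y$ and $\deg(Y)\le C(n)\,\deg(X)^{\dim S}$, as required. Note that the one ingredient that is ineffective in the classical arithmetic treatment, a Galois orbit lower bound, plays no role here, its place being taken throughout by Bézout applied to the prolonged equations.
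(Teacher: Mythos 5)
Your high‑level framing — reduce to a Hodge‑generic $X$, encode $Y(1)$ differential‑algebraically via the third‑order Schwarzian equation, invoke functional transcendence to show the unlikely points are not Zariski dense, and then extract an explicit bound — matches the paper's in spirit. But the step you single out as ``the main obstacle'' is indeed where the proposal breaks, and the fix you offer is not correct and is not what the paper does.

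You propose to bound the degree (level $N$) of the smallest weakly special $W_p$ through each unlikely point $p$ in terms of $\deg(X)$ and $n$, claiming ``the proof of Ax--Schanuel produces a $W_p$ whose degree is bounded polynomially in $\deg(X)$ and $n$.'' There is no such bound: the special curves $\Phi_N(x_i,x_j)=0$ in $Y(1)^2$ have degrees tending to infinity with $N$, and a subvariety $X$ of fixed degree over a function field can meet them at arbitrarily high level. Ax--Schanuel (with or without derivatives) is a qualitative functional‑transcendence statement and does not carry a degree bound on the exceptional weakly special variety; the finiteness it supplies is in the number of exceptional intersection components (via Noetherianity of the Kolchin topology), not in their defining degrees. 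Consequently your closing Bézout step — ``finitely many combinatorial shapes'' and ``equations of degree bounded polynomially'' — does not go through, because within a fixed combinatorial shape the modular level, and hence the degree, is unbounded.

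The paper avoids this entirely. Rather than controlling each $W_p$ individually, Lemma~\ref{special-ODE} packages \emph{all} $\ell$‑dimensional strongly special subvarieties into finitely many \emph{fixed} differential‑constructible sets $\Xi_\bH$ (one for each conjugacy class of relevant semisimple $\QQ$‑subgroups $\bH \leq \bG$), so that $\cS_S^{[\ell]} \subseteq \bigcup_\bH \Xi_\bH =: \Xi^{[\ell]}$. These $\Xi_\bH$ are defined over $\CC$, independent of $X$, and of bounded complexity. Theorem~\ref{thm:ZP-H} (which uses only the ordinary Ax--Schanuel, not the version with derivatives) together with Remark~\ref{rk:ft-to-nonconstant} shows that the Zariski closure of $(X(\cM)\smallsetminus X(\CC))\cap\Xi^{[\ell]}$ is a \emph{proper} subvariety $Y\subsetneq X$. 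The explicit degree bound then comes not from Bézout applied to modular equations, but from Binyamini's effective estimate \cite[Corollary 11]{Bin} for the degree of the Zariski closure of the solution set of a system of algebraic differential equations intersected with $X$, yielding $\deg(Y) \leq C\deg(X)^{\dim(S)}$ with $C$ depending only on $S$ and $\ell$. That effective differential Nullstellensatz--type input is the ingredient missing from your proposal, and without it the argument does not produce an explicit $C(n)$.

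Two smaller notes: the version of Ax--Schanuel you invoke (with derivatives) is stronger than needed for this corollary — the paper uses it only in Section~\ref{trivial-types} — and the reduction to a Hodge‑generic $X$ inside a smaller weakly special $Y(1)^{n'}$ is fine but, in the paper, the handling of semiconstant/constant points is done via $\Xi^\ft$ and Remark~\ref{rk:ft-to-nonconstant} rather than by changing the ambient variety.
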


We approach this problem by relating it to a counting problem for algebraic differential equations through
finding  non-linear algebraic differential equations satisfied by all of the points in $Y_\unlike$.  
More precisely, we deal initially with a smaller set $Y_\unlike^\ft$
of ``fully transcendental'' unlikely points which do not come from
constant-parameter points of families of weakly special subvarieties;
see 2.9, 2.13, 3.4, and 4.3.
By exploiting known functional transcendence theorems in 
the style of Ax-Schanuel and model theoretic arguments
we show that these algebraic differential equations 
cut out a differential algebraic subset which is 
not Zariski dense.  Results on effective bounds for 
the degrees of differential algebraic sets ~\cite{Bin, HrPi} provide explicit bounds on the 
degree of the Zariski closure of the set of solutions
to these differential equations.   
Our techniques apply to many other problems and 
our key technical result may be seen as a conditional 
theorem to the effect that an effective 
Zilber-Pink theorem may be deduced from a suitable 
Ax-Schanuel theorem for a given class of varieties. 

In~\cite{As19,As18}, Aslanyan proves uniform 
Zilber-Pink-type theorems for products of modular curves.
Our methods are similar in places, 
and there is some overlap in the results. 
However the motivations are quite different.
Aslanyan was motivated by extensions of the Zilber-Pink conjecture in the modular case
to include derivatives, and hence is concerned with 
varieties defined over the constants in that setting. Our objective is
to consider the Zilber-Pink conjecture itself in a function field, with varieties defined over the function field, and in general Shimura varieties (and even variations of Hodge structures), with a view getting 
effective results following~\cite{FrSc}.
In~\cite{As18},
the modular Ax-Lindemann-Weierstrass theorem with 
derivatives~\cite{Pi13} as expressed differentially 
algebraically is used to deduce uniformities from the 
compactness theorem of first-order logic.  When specialized
to this case, the main differences between the results of the 
present paper and those of~\cite{As18} are that we 
work with algebraic varieties over function fields so that 
their zero-dimensional and more generally non-strongly 
atypical unlikely intersections are controlled (while these 
are necessarily ignored in~\cite{As18}) and we 
use effective elimination theory
to give explicit upper 
bounds rather than the compactness
theorem to show the existence of 
bounds.   
Nevertheless, with Theorem~\ref{thm:constant-to-qual} we show that 
the qualitative version of our theorem  
describing unlikely intersections follows formally from 
uniform versions of the Zilber-Pink conjecture in the style
of Aslanyan's by using results of Chatzidakis, Ghioca, 
Masser and Maurin~\cite{CGMM} on unlikely intersections for 
pairs of fields.

Differential algebraic arguments of the kind we use here
were employed in~\cite{FrSc} to identify definable
sets relative to the theory of differentially closed
fields exhibiting hitherto unobserved behaviors.   
In Section~\ref{trivial-types} we extend this analysis 
to those differential equations associated to covering 
maps of Shimura varieties.  In particular, we show that for 
irreducible Shimura varieties, the associated differential
equations give new examples of strongly minimal, 
geometrically trivial, non-$\aleph_0$-categorical 
types relative to $\operatorname{DCF}_0$.   Interestingly,
by considering the differential equations associated to
families of special subvarieties, we produce examples of 
types whose Lascar and Morley ranks differ.

\section{Conventions, notation and basic definitions}
\label{conventions}

In this section we recall some basic notions and set our
notation.

We follow a notation similar to that of~\cite{BKT} to 
speak of double coset spaces, though with the 
next definition, we allow for somewhat greater 
generality.

\begin{Def}
\label{def:quotient-space}
Let $G$ be a connected real Lie group,
$M \leq G$  a compact subgroup and $\Gamma \leq G$
a discrete subgroup. Then $S_{\Gamma,G,M} := 
\Gamma \backslash G / M$ is regarded (for now) as a 
real analytic space.  We call 
$S_{\Gamma,G,M}$ a \emph{quotient space}.

Given such triples $(G,M,\Gamma)$ 
and $(G',M',\Gamma')$, a map of Lie groups 
$\varphi:G' \to G$ and an element $g \in G$ for which 
$\varphi(\Gamma') \leq \Gamma^g := g \Gamma g^{-1}$
and $\varphi(M') \leq M$, 
the function $G' \to G$ given by 
$x \mapsto g \varphi(x)$ induces a map of analytic 
spaces $S_{\Gamma',G',M'} \to S_{\Gamma,G,M}$.  
These are the morphisms between quotient spaces.      	
\end{Def}

For our purposes, we will be most concerned with
the case that $S_{\Gamma,G,M}$ has the structure of a 
complex analytic space.

\begin{Def}
\label{complex-quotient}
Let $\bG$ be a connected real algebraic group and 
$\bB \leq \bG$ an algebraic subgroup.  Set 
$G := \bG(\RR)^+$, the connected component of the
identity with respect to the Euclidean topology and 
suppose that $M := \bB(\RR) \cap G$ is compact, 
$\check{D} := \bG/\bB$ is an algebraic 
variety~\footnote{For some definability results, it is necessary to 
require $\check{D}$ to be projective.}, and that $D := G/K \subseteq \check{D}(\CC)$
is an open domain.  For $\Gamma \leq G$ discrete, the 
quotient map $q:D \to \Gamma \backslash D = \Gamma 
\backslash G / M = S_{\Gamma,G,M}$ gives 
$S_{\Gamma,G,M}$ the structure of a complex analytic 
space.  In general, we say that $S_{\Gamma,G,M}$ is a 
\emph{complex quotient space} if it arises in this manner.  Note 
that if $S_{\Gamma',G',M'}$ and $S_{\Gamma,G,M}$ are
complex quotient spaces and $\rho:S_{\Gamma',G',M'}
\to S_{\Gamma,G,M}$ is a map of quotient spaces, then 
it is complex analytic.
\end{Def}

\begin{conv}
For us, the word \emph{definable} means 
definable in the o-minimal structure  
$\mathbb{R}_{\text{an},\exp}$, 
the real field augmented by restricted 
analytic functions and the real exponential 
function. 
\end{conv}  

\begin{Def}
\label{bi-algebraic-def}
Let $S_{\Gamma,G,M}$ be a complex quotient 
space, $S$ a quasi-projective 
complex algebraic variety and 
$f:S \to S_{\Gamma,G,M}$ a map from $S$ (regarded
as a complex analytic space) to $S_{\Gamma,G,M}$. 
We say that this map is \emph{definably bi-algebraic} 
(or just \emph{bi-algebraic})
if there is a definable fundamental domain 
$F \subseteq D$ for which the fibre product
$\{ (a,b) \in F \times S(\CC) ~:~ q(a) = f(b) \}$
is definable.	
\end{Def}

\begin{Rk}
If $f:S \to S_{\Gamma,G,M}$ is definably bi-algebraic,
then by the o-minimal definable Chow theorem~\cite{PS-Crelle} 
the fibre equivalence relation on $S \times S$ given by 
$x \sim y :\Longleftrightarrow f(x) = f(y)$ is 
algebraically constructible.  Thus, at the cost 
of replacing $S$ with the constructible quotient
$S/\sim$, we may assume that $f:S \to S_{\Gamma,G,M}$ 
is an inclusion.  From now on, we tacitly make this 
assumption and regard $S$ as a (not necessarily closed) 
analytic subvariety of $S_{\Gamma,G,M}$.	
\end{Rk}

\begin{Def}
If $G = \bG(\RR)^+$ where $\bG$ is a 
semisimple $\QQ$-algebraic group,
and $\Gamma \leq G$ is an arithmetic lattice, then 
we call $S_{\Gamma,G,M}$ \emph{an arithmetic quotient}.
\end{Def}

The main theorems, 
Theorems 1.1 and 1.3   of~\cite{BKT}, express the senses in 
which arithmetic quotients are definable and period
mappings associated to variations of Hodge structures
are bi-algebraic.  We summarize these results with the 
following theorem.

\begin{fact}[\cite{BKT}]
\label{def-period}
Each arithmetic quotient $S_{\Gamma,G,M}$ is 
definable, even relative to just $\RR_{\text{alg}}$,
the ordered field of real numbers.  Relative to this 
definable structure, each morphism of 
arithmetic quotients $S_{\Gamma',G',M'} \to 
S_{\Gamma,G,M}$ is definable.    

If $S$ is an irreducible 
quasi-projective complex algebraic 
variety supporting a polarized 
variation of Hodge structure
$\mathbb{V} \to S$ of some 
fixed weight $k$, 
$\bG$ is the associated adjoint 
$\mathbb{Q}$-semisimple group of the  
generic Mumford-Tate group 
$\mathbf{M}\mathbf{T}(\mathbb{V})$, $D = G/M$ is 
its associated Mumford-Tate domain, and $\Gamma \leq G$
is the image of the monodromy representation, then
the period mapping 
$\Phi_S:S \to \Gamma \backslash D = S_{\Gamma,G,M}$
is definably bi-algebraic.	
\end{fact}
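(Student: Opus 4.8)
The statement packages together two theorems of~\cite{BKT}, and my plan is to treat them separately. The definability of an arithmetic quotient $S_{\Gamma,G,M}$ is, in essence, Borel--Harish-Chandra reduction theory organised so as to respect the $\QQ$-structure; the bi-algebraicity of a period map is genuinely analytic and rests on Schmid's nilpotent orbit theorem together with the several-variable $\mathrm{SL}_2$-orbit theorem of Cattani--Kaplan--Schmid.

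For the first part I would fix a faithful $\QQ$-representation of $\bG$, realise $\check D = \bG/\bB$ as a projective $\QQ$-variety, and observe that $D = G/M \subseteq \check D(\CC)$ is a semialgebraic open domain (it is a $\bG(\RR)^+$-orbit, hence the image of a semialgebraic set under an algebraic action map). By reduction theory there is a finite union $\Sigma$ of translates of Siegel sets surjecting onto $\Gamma \backslash G$; Siegel sets are cut out by bounds on Iwasawa coordinates, which are algebraic functions over $\QQ$, so $\Sigma$ and its image $F \subseteq D$ are semialgebraic. The Siegel property asserts that $\Sigma$ meets only finitely many of its $\Gamma$-translates, and the finitely many $\gamma \in \Gamma$ involved can be pinned down semialgebraically; hence the fibre equivalence relation on $F$, a finite union of the semialgebraic graphs $\{(x,\gamma x)\}$, is semialgebraic. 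This exhibits $S_{\Gamma,G,M}$ as a definable --- indeed semialgebraic --- space, justifying the clause ``relative to just $\RR_{\text{alg}}$''. For a morphism $S_{\Gamma',G',M'} \to S_{\Gamma,G,M}$ induced by $\varphi \colon \bG' \to \bG$ and $g \in G$, the map $x \mapsto g\varphi(x)$ is semialgebraic (a $\QQ$-morphism of algebraic groups followed by translation), it carries the semialgebraic fundamental set for $\Gamma'$ into a semialgebraic subset of $G$ meeting only finitely many $\Gamma$-translates of $\Sigma$ (compatibility of Siegel sets under morphisms), and composing with the semialgebraic ``reduce into $\Sigma$'' map yields a semialgebraic representative of the morphism.

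For the period map I would choose a smooth projective compactification $\bar S \supseteq S$ with $\partial S = \bar S \smallsetminus S$ a normal crossings divisor, pass to a finite cover so that local monodromies are unipotent, and cover $\bar S$ by polydiscs $\Delta^n$ with $S \cap \Delta^n \cong (\Delta^*)^r \times \Delta^{n-r}$. On such a chart, lift the period map to $\tilde\Phi \colon \mathbb{H}^r \times \Delta^{n-r} \to \check D$ via $s_j = e^{2\pi i z_j}$; the nilpotent orbit theorem supplies commuting nilpotent logarithms $N_1,\dots,N_r$ and a holomorphic $\psi$ on the full polydisc with $\tilde\Phi(z,w) = \exp\!\bigl(\textstyle\sum_j z_j N_j\bigr)\psi(s,w)$. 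Restricting the $z_j$ to the vertical strip $\{\,0 \le \mathrm{Re}\, z_j < 1,\ \mathrm{Im}\, z_j > C\,\}$, a fundamental domain for $z_j \mapsto z_j + 1$, one sees that $z \mapsto \exp(\sum_j z_j N_j)$ is polynomial in the $z_j$ (nilpotence), that $z_j \mapsto s_j$ uses the real exponential and the restricted $\sin,\cos$, and that $\psi$ on the resulting small closed polydisc is a restricted analytic function; so $\tilde\Phi$ is definable in $\RR_{\text{an},\exp}$ there. The remaining task is to glue these local definable pieces into a single definable fundamental domain $F \subseteq D$ compatible with the Siegel-set fundamental set of the first part, so that $\{(a,b)\in F \times S(\CC) : q(a) = \Phi_S(b)\}$ is definable.

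The hard part will be precisely this gluing: one must arrange a fixed fundamental set for $\Gamma$ so that near each boundary stratum it is aligned with the unipotent directions $N_j$ and absorbs the image of the vertical-strip region. This is where the several-variable $\mathrm{SL}_2$-orbit theorem is essential --- it provides the sharp comparison between the Hodge metric on $\mathbb{V}$ and the ``model'' metric of the associated nilpotent orbit, which is what makes the error terms in the nilpotent orbit theorem definably small and forces the period map into a Siegel set attached to the corresponding cusp. By contrast, the group-theoretic first part only uses reduction theory over $\QQ$, which is why definability there can be taken relative to $\RR_{\text{alg}}$, whereas the boundary asymptotics genuinely require the restricted analytic functions and the real exponential, i.e.\ the full strength of $\RR_{\text{an},\exp}$.
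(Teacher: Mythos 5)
The paper does not prove this statement: it is labelled a \emph{Fact} and is cited from~\cite{BKT} (specifically, it summarizes Theorems~1.1 and~1.3 there), so there is no internal proof for your proposal to be compared against. That said, your sketch is a faithful high-level account of the strategy actually used in~\cite{BKT}: semialgebraic Siegel-set fundamental domains via reduction theory over $\QQ$ for the $\RR_{\mathrm{alg}}$-definability of arithmetic quotients and their morphisms, and local unipotent lifts controlled by Schmid's nilpotent orbit theorem together with the Cattani--Kaplan--Schmid several-variable $\mathrm{SL}_2$-orbit theorem for definability of period maps in $\RR_{\mathrm{an},\exp}$, with the $\mathrm{SL}_2$-orbit theorem supplying the metric comparison that forces the image into a Siegel set near each boundary stratum. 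You correctly identify the gluing step (aligning local vertical-strip lifts with a fixed Siegel fundamental set across all boundary components) as the hard technical part; making this precise is the bulk of~\cite{BKT} and your outline does not (and could not, at this length) fill in those estimates, but the decomposition of the problem, the choice of tools, and the explanation of why the first part only needs $\RR_{\mathrm{alg}}$ while the second needs full $\RR_{\mathrm{an},\exp}$ are all on target.
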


Of particular interest to us, is the case that 
$S$ is a Shimura variety and 
$\Phi_S = \operatorname{id}_S:S \to S_{\Gamma,G,M}$ 
expresses $S$ as a locally symmetric space.

The main results of~\cite{BBKT20} extend Fact~\ref{def-period} 
to show that the period map associated to a 
variation of mixed Hodge stuctures is definable.  
There is an interesting subtlety in this theorem in that
it is necessary to make a choice of the 
definable structure on the space $\Gamma \backslash D$.

\begin{Def}
A weakly special subvariety of the complex 
quotient variety $S_{\Gamma,G,M}$ is a 
an analytic subvariety of $S_{\Gamma,G,M}$
obtained as the image of a map 
$S_{\Gamma',G',M'} \to S_{\Gamma,G,M}$ of 
complex quotient varieties.	
\end{Def}

From our definition of weakly special varieties, for 
any complex quotient $S_{\Gamma,G,M}$ and point 
$x \in S_{\Gamma,G,M}$, the zero dimensional 
space $\{ x \}$ is a weakly special variety. 
More generally, for any complex quotient
$S_{\Gamma',G',M'}$, the space 
$\{ x \} \times S_{\Gamma',G',M'} 
\subseteq S_{\Gamma,G,M} \times S_{\Gamma',G',M'} 
= S_{\Gamma \times \Gamma',G \times G', M \times M'}$
is a weakly special subvariety.  
We describe this construction and isolate those 
special subvarieties which come from such horizontal 
special varieties with the next definition.

\begin{Def}
\label{def:weakly-special}
Let $T$ be a complex analytic space.  Given complex quotient spaces 
$S_{\Gamma,G,M}$, $S_{\Gamma_1,G_1,M_1}$, and 
$S_{\Gamma_2,G_2,M_2}$, a point $x \in (S_{\Gamma_1,G_1,M_1})_T$, the base change of 
$S_{\Gamma_1,G_1,M_1}$ to $T$, 
and a finite map of quotients $\rho:S_{\Gamma_1 \times \Gamma_2,G_1 \times G_2,M_1 \times M_2} \to 
S_{\Gamma,G,M}$, 
the variety $\rho(\{ x \} \times (S_{\Gamma_2,G_2,M_2})_T) 
\subseteq (S_{\Gamma,G,M})_T$ is called \emph{$T$-weakly special}.   
When $T$ is a single point with the usual reduced structure (so there has 
been no base change) and $\dim S_{\Gamma_1,G_1,M_1} \geq 1$, we 
call $\rho(\{ x \} \times S_{\Gamma_2,G_2,M_2})$ \emph{semiconstant}.
A special subvariety which is not semiconstant is called 
\emph{strongly special}.	   
\end{Def}

\begin{Rk}
In practice, $T$ will be the analytic spectrum of some 
subfield $K$ of the field $\cM$ of germs of meromorphic functions at some point on the 
complex plane. 	In this case, we will say ``$K$-weakly special'' or ``$T$-weakly special''.
\end{Rk}

\begin{Rk}
Definition~\ref{def:weakly-special} degenerates in the case that $\dim S_{\Gamma,G,M} = 0$.	
\end{Rk}

With the definition of the strongly special subvarieties in place, we
define the strongly special loci.

\begin{Def} 
Let $S_{\Gamma,G,M}$ be a complex quotient.
For each positive integer  $\ell \leq \dim S_{\Gamma,G,M}$ we 
let $\cS^{[\ell]} = \cS^{[\ell]}_{S_{\Gamma,G,M}}$ be the union 
of all strongly special subvarieties of $S_{\Gamma,G,M}$ of
dimension $\ell$.  
We set $\cS^{[\leq \ell]} := \bigcup_{i=1}^\ell  \cS^{[i]}$.

If $f:S \to S_{\Gamma,G,M}$ is definably bi-algebraic, then we define 
$\cS_S^{[\ell]} := f^{-1} \cS^{[\ell]}$ and 
$\cS_S^{[ \leq \ell ]} := f^{-1} \cS^{[\leq \ell]}$, 
each of which is a countable union of complex algebraic 
subvarieties of $S$. 
\end{Def}

\begin{Def}
Let $f:S \to S_{\Gamma,G,M}$ be bi-algebraic
and let $K$ be a $\CC$-algebra. We say that a $K$-rational
point $a \in S(K)$ is \emph{semiconstant} if there is
some semiconstant weakly special subvariety 
$Y \subseteq S_{\Gamma,G,M}$ for which $a \in f^{-1}(Y) (K)$.
Otherwise, we say that $a$ is \emph{fully transcendental}.  	
We write $S(K)^\ft$ for the set of fully transcendental points in 
$S(K)$.
\end{Def}

\begin{Rk}
If $Y$ is a weakly special subvariety of $S_{\Gamma,G,M}$, then 
$f^{-1}(Y) \subseteq S$ is an \emph{algebraic} subvariety of $S$
so that it makes sense to evaluate its set of $K$-rational points.  	
\end{Rk}
 
With these definitions in place we may state our 
functional version of the Zilber-Pink conjecture 
for unlikely intersections  in 
a qualitative form.

\begin{thm}
\label{ZP-qualitative-unlikely}
Let $\Phi_S:S \to S_{\Gamma,G,M}$ be a period mapping 
associated to a  variation of Hodge structures,
$\ell$ be a positive integer with  $\ell + \dim(S) < \dim(S_{\Gamma,G,M})$, 
$K$ be a $\CC$-algebra, and $X \subseteq S_K$ be an absolutely irreducible 
subvariety of base change of $S$ to $K$ for which $f(X)$ is not contained in 
any proper special subvariety,  then $(X(K) \smallsetminus X(\CC)) \cap \cS_S^{[\ell]}$ is not
Zariski dense in $X$. 
\end{thm}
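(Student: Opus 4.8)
The plan is to reduce the statement to a functional transcendence (Ax--Schanuel) input together with a counting/differential-algebra argument, following the strategy announced in the introduction. Here is how I would organize it.

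\medskip

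\noindent\textbf{Step 1: Pass to the universal cover and set up the differential-algebraic picture.}
Using Fact~\ref{def-period}, the period map $\Phi_S:S\to S_{\Gamma,G,M}$ is definably bi-algebraic, so (after the reduction in the Remark following Definition~\ref{bi-algebraic-def}) I regard $S$ as an analytic subvariety of $S_{\Gamma,G,M}=\Gamma\backslash D$ and work with the covering map $q:D\to\Gamma\backslash D$. The key object is the ``two-sorted'' incidence variety relating a point of $S$ to its lifts in $D$; over a differential field this is governed by a system of algebraic differential equations (the Schwarzian-type equations attached to $q$). A strongly special subvariety of dimension $\ell$ pulls back, in the cover, to a $G'$-orbit for an appropriate subgroup; the condition that a point $a\in X(K)$ lies on such a subvariety, \emph{together with} the condition that $a$ is nonconstant (i.e.\ $a\in X(K)\smallsetminus X(\CC)$), should be expressible by saying that a lift of $a$ satisfies a differential-algebraic condition of bounded complexity.

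\medskip

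\noindent\textbf{Step 2: Invoke Ax--Schanuel to bound the atypical/unlikely locus in the cover.}
The heart of the argument is the Ax--Schanuel theorem for the period map of a variation of Hodge structures (the Bakker--Tsimerman type statement), which controls the transcendence degree of the coordinates of a point together with its period lift relative to the dimension of the smallest weakly special subvariety containing it. I would use this to show: if $(X(K)\smallsetminus X(\CC))\cap \cS_S^{[\ell]}$ were Zariski dense in $X$, then the generic point of $X$ would itself satisfy the atypicality estimate, forcing $f(X)$ into a proper weakly special (hence, after a monodromy/Hodge-generic argument, proper special) subvariety of $S_{\Gamma,G,M}$, contradicting the hypothesis. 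The numerical condition $\ell+\dim(S)<\dim(S_{\Gamma,G,M})$ is exactly what makes the intersection ``unlikely,'' so that Ax--Schanuel applies with a strict inequality. Care is needed here to separate the \emph{fully transcendental} unlikely points (where Ax--Schanuel bites directly) from the semiconstant ones; since $\cS_S^{[\ell]}$ consists of \emph{strongly} special subvarieties, the semiconstant contributions are by definition excluded, which is why the theorem is stated with $\cS_S^{[\ell]}$ rather than the full special locus.

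\medskip

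\noindent\textbf{Step 3: Conclude via the model-theoretic/differential-closure argument.}
Finally I would phrase the whole thing inside a differentially closed field containing $K$: the set of nonconstant points of $X$ lying on some $\ell$-dimensional strongly special subvariety is contained in a definable set of $\mathrm{DCF}_0$ (cut out by the differential equations from Step~1 intersected with $X$), and Step~2 shows this definable set has no component equal to (a generic lift of) $X$ itself; hence its Zariski closure is a proper subvariety of $X$. This is the qualitative statement; the effective version quoted in Corollary~\ref{effective-ZPY} would then come from the degree bounds for differential-algebraic sets cited in the introduction, but for the present theorem only the non-density is needed.

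\medskip

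\noindent\textbf{Main obstacle.}
I expect the real difficulty to be Step~2: translating the geometric ``unlikely intersection'' hypothesis into the precise transcendence-degree inequality to which Ax--Schanuel applies, and in particular handling the boundary between strongly special and semiconstant subvarieties so that the nonconstancy hypothesis $a\notin X(\CC)$ does genuine work. One must be sure that a point on an $\ell$-dimensional strongly special subvariety which is nonetheless ``constant in the fibre direction'' cannot sneak past the argument; this is exactly the role of the $\ft$ (fully transcendental) notion flagged in the introduction, and getting the bookkeeping right there — rather than any single hard estimate — is where the subtlety lies.
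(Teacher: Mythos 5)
Your overall framework (encode membership in $\cS_S^{[\ell]}$ via differential-algebraic conditions $\Xi_\bH$, then apply Ax--Schanuel) matches the paper's, but Step~2 has a genuine gap. The Ax--Schanuel condition (Definition~\ref{def:AS}) bounds $\trdeg_\CC\CC(a,\widetilde{a})$ from below by $\dim D + 1$, whereas $X$ and a component $Z$ of $\Xi_\bH\cap X$ are only defined over some finitely generated subfield $L\subseteq\cM$, and that base contributes $\trdeg_\CC L$ to every transcendence degree you compute. For a single generic $a\in Z(\cM)$ with lift $\widetilde{a}$ you only get $\trdeg_\CC\CC(a,\widetilde{a})\leq \trdeg_\CC L + \ell + \dim X$; since $\trdeg_\CC L$ can be arbitrarily large, the hypothesis $\ell+\dim X<\dim\check{D}$ forces no contradiction, and Ax--Schanuel tells you nothing. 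What actually closes the argument in the paper (Theorem~\ref{thm:ZP-H}) is an amplification you do not mention: set $N=\trdeg_\CC L+1$, take a Morley sequence $(a_i)_{i=1}^N$ in $Z(\cM)$ over $L$, and apply Ax--Schanuel to the $N$-fold power $f^{\times N}$. The overhead $\trdeg_\CC L$ is paid only once, while the Ax--Schanuel lower bound scales as $N\dim\check{D}+1$; this forces the $N$-tuple onto a proper weakly special subvariety $S'\subseteq S_{\Gamma,G,M}^{\times N}$, and a projection argument (choose the minimal $j$ with $\pi_j(S')$ proper) then extracts an $\cM$-weakly special subvariety containing $Z$. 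This is exactly why the hypothesis of Theorem~\ref{thm:ZP-H} demands Ax--Schanuel for \emph{every} Cartesian power $f^{\times N}$, not just for $f$. Without this Cartesian-power/Morley-sequence device your Step~2 does not go through.

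Two smaller points. First, the reduction to finitely many differential conditions, implicit in your Step~1, rests on the finiteness of $\bG(\RR)$-conjugacy classes of connected semisimple $\QQ$-subgroups (Lemma~\ref{special-ODE}); that is what makes $\cS_S^{[\ell]}\subseteq\bigcup_{\bH\in\cH^{[\ell]}}\Xi_\bH$ a finite union. Second, your remark that ``since $\cS_S^{[\ell]}$ consists of strongly special subvarieties, the semiconstant contributions are by definition excluded'' is not quite right: a nonconstant point can lie on a strongly special subvariety and \emph{simultaneously} be a semiconstant point (i.e.\ also lie on some semiconstant weakly special variety), and these are precisely the points that Ax--Schanuel does not control directly. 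The paper therefore first proves non-density for the fully transcendental locus $\Xi_\bH^\ft\cap X$ and then upgrades to all nonconstant points by induction on $\dim X$ together with a $\CC$-parameterized family of $\cM$-weakly special varieties (Remark~\ref{rk:ft-to-nonconstant}).
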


\begin{Rk}
In~\cite{K-HL}, the corresponding Zilber-Pink conjecture (without the restriction 
to fully transcendental points) is expressed with a weaker dimension
theoretic condition. The version in Theorem~\ref{ZP-qualitative-unlikely} is an 
artifact of the statement of the existing Ax-Schanuel theorem for 
PVHS.   	
\end{Rk}

Let us recall the construction of prolongation spaces and how these
are used to describe algebraic differential equations.  
Let $(K,\partial)$ be a differential field with field of constants 
$K^\partial := \{ a \in K ~:~ \partial(a) = 0 \}$ equal to 
$\CC$.  Let $m \in \mathbb{N}$ be a natural number.  Then we have
two $K$-algebra structures on $K[\epsilon]/(\epsilon^{m+1})$, 
one coming from the usual inclusion $\iota:K \hookrightarrow  	K[\epsilon]/(\epsilon^{m+1})$
and the other coming from exponentiating the distinguished 
derivation:  $\exp(\epsilon \partial):K \to K[\epsilon]/(\epsilon^{m+1})$
given by $a \mapsto \sum_{j=0}^m \frac{1}{j!} \partial^j(a) \epsilon^j$.  
The map $\iota$ may be seen as the exponential of the trivial derivation.  

\begin{Def}
Let $(K,\partial)$ be a differential field with field of
constants $\CC$ and $m \in \mathbb{N}$ be a natural number.  
For $X$ a $K$-scheme, the $m^\text{th}$ prolongation space 
$\tau_m X$ is the $K$-scheme which represents the functor
$T \mapsto (X \otimes_{K,\exp(\epsilon \partial)} K[\epsilon]/(\epsilon^{m+1})) 
(T \otimes_{K,\iota} K[\epsilon]/(\epsilon^{m+1}))$. 	

For any differential $K$-algebra $(R,\partial_R)$, there is a
map $\nabla_m:X(R) \to \tau_m X(R)$ corresponding to the 
map of sets $X(R) \to (X \otimes_{K,\exp(\epsilon \partial)} K[\epsilon]/(\epsilon^{m+1}))
(R[\epsilon]/(\epsilon^{m+1})$ given by post-composition with $\exp(\epsilon \partial_R)$.
\end{Def}

\begin{Rk}
The construction of $\tau_m$ is 
functorial.  If $X$ is obtained from a $\mathbb{C}$-scheme by base change, then 
$\tau_m X$
is usually referred to as a \emph{jet scheme} or \emph{truncated arc space}.   We 
prefer to use the language of arc spaces and will write this as $\cA_m X$.  
We drop the subscript $m$ when it
is understood. 
\end{Rk}

\begin{Def}
If $(K,\partial)$ is a differential field, and 
$X$ is a $K$-scheme, then a differential subscheme $V$ of $X$ is 
given by a subscheme $V_m \subseteq \tau_m X$ of a prolongation space
$\tau_m X$ for some natural number $m$. If $(R,\partial_R)$ is
 a differential $K$-algebra, then $V(R) := \{ a \in X(R) ~:~ 
 \nabla_m(a) \in V_m(R) \}$.  A finite Boolean combination of
 differential subschemes of $X$ is called a \emph{differential
 constructible subset of $X$}.   A differential constructible
 $f$ function on $X$ to the $K$-scheme $Y$ is given by a 
 differential constructible subset of $X \times Y$ which 
 when evaluated on any differential $K$-algebra is the graph of a 
 function.   
\end{Def}

The equations giving the constants determine 
a particularly important class of differential
varieties.

\begin{Def}
Let $(K,\partial)$ be a differential field and 
let $X$ be a scheme over $K^\partial$, the constants
of $K$.   The constant part of $X$, 
written $X^\partial$, is the differential subscheme of
$X$ defined by $X_1^0 \subseteq \tau_1 X = \cA_1 X = TX$ where 
$X_1^0$ is the image of the zero section of $X$ inside its tangent
bundle, which in this case that $X$ is defined over the constants, 
may be identified with the first prolongation space $\tau_1 X$.  	
\end{Def}

We shall
make use of the Seidenberg embedding theorem~\cite{Se58,Se69} in the 
form that if $K \subseteq \mathcal{M}(U)$ is a differential 
subfield of the field $\mathcal{M}(U)$ of germs of meromorphic 
functions on some connected open domain $U$ containing $0$, $K$ is finitely 
generated as a differential field over $\mathbb{C}$,
and $L$ is a countably generated as 
a differential field extension of $L$, then 
$L$ may be realized as a sub differential field of
the field $\cM$ of germs at some point in $U$
of meromorphic functions on the disc over the 
natural embedding $K \hookrightarrow \mathcal{M}$.

At various point we will make use of differential algebra in the sense of
Ritt and Kolchin.  See~\cite{Ko73} for details.

\begin{Rk}
Thus our results can be equally stated in terms of a field extension 
$K$ of the complex numbers, a field  $\mathcal{M}$ of germs
of meromorphic functions as above, or of a finitely generated
field $K$ where ``constant'' means ``algebraic''.
\end{Rk}

\section{Differential equations for special 
subvarieties}
\label{DEs}

The special subvarieties of complex quotient 
spaces are themselves images of homogeneous 
spaces.   Using the notion of the generalized
Schwarzians as developed in~\cite{Sc-ade} and
then expanded in~\cite{MPT}, we may recognize
these homogeneous spaces using algebraic 
differential equations. The theorem on generalized
logarithmic derivatives of~\cite{Sc-ade} permits
us to see all of the special varieties in bi-algebraic
varieties in terms of finitely many algebraic
differential equations.

Let us recall the construction of the generalized 
Schwarzians. We are given an algebraic group $\bG$ over
$\CC$ and an action $\bG \curvearrowright X$ of $\bG$ on 
the algebraic variety $X$.  For each $m \in \mathbb{N}$, 
this action induces an action $\cA_m \bG \curvearrowright \cA_m X$ 
of the $m^\text{th}$ arc space of $\bG$ (which is itself
an algebraic group) on the $m^\text{th}$ arc space of 
$X$.   Via the section $s:\bG \to \cA_m \bG$, we obtain an action 
$\bG \curvearrowright \cA_m X$.   The quotient $\bG \backslash \cA_m X$
might not be an algbebraic variety, but it is a constructible set.   
For any differential field $(K,\partial)$ with field of constants 
$\mathbb{C}$, we may consider the $\bG(\CC)$-orbit equivalence relation 
on $X(K)$.  That is, for $a, b \in X(K)$ we have $a \sim b$ just in 
case there is some $g \in \bG(\CC)$ with $g \cdot a  = b$. 
By~\cite[Proposition 3.9]{Sc-ade}, if $m$ is large enough, 
then $\sS_{\bG,X}:X(K) \to (\bG \backslash \cA_m X)(K)$ given by sending 
$a$ to the image of $\nabla_m(a)$ in $(\bG \backslash  \cA_m X)(K)$ 
has the property that $\sS_{\bG,X}(a) = \sS_{\bG,X}(a)$ if and only 
if $a \sim b$.   We refer to $S_{\bG,X}$ as the \emph{generalized 
Schwarzian} associated to the action $\bG \curvearrowright X$ and 
to $\bG \backslash \cA_m X$ as the \emph{Schwarzian variety}.   
By making use of partial differential operators, better control on 
$m$ may be attained (see~\cite{MPT}).

Consider now a bi-algebraic variety $f:S \to S_{\Gamma,G,M}$, following the notation of 
Definitions~\ref{complex-quotient} and~\ref{bi-algebraic-def}.  By the main theorem 
of~\cite{Sc-ade} (it is stated there in the case that
$f = \operatorname{id}_S$, but the proof goes 
through whenever $f:S \to S_{\Gamma,G,M}$ 
is bi-algebraic), the ostensibly differenital 
analytically constructible function 
$\chi := \sS_{\bG,\check{D}} \circ q^{-1}$ is 
differentially constructible.  

We now use these constructions to capture the 
special varieties.

\begin{lem}
\label{H-orbit-eq}
Let $f:S \to S_{\Gamma,G,M}$ be bi-algebraic and 
$\bH \leq \bG$ be an algebraic subgroup of $\bG$.  
There is a differentially constructible set 
$\Xi_\bH \subseteq S$ defined over $\CC$ having the 
property that for any point $a \in S(\cM)$, 
we have $a \in \Xi_\bH(\cM)$ if and only if there is some 
$\widetilde{a} \in \check{D}(\cM)$, $b \in \check{D}(\CC)$,
$g \in \bG(\mathbb{C})$, and $h \in \bH(\cM)$ with 
$q(\widetilde{a}) = f(a)$ and $\widetilde{a} = h^g \cdot b = g h g^{-1} \cdot b$.
\end{lem}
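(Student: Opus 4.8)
The goal is to produce a differentially constructible set $\Xi_\bH \subseteq S$, defined over $\CC$, whose $\cM$-points are exactly those $a$ admitting a lift $\widetilde a \in \check D(\cM)$ with $q(\widetilde a) = f(a)$ that lies in the $\bH$-orbit (after conjugation by a constant $g \in \bG(\CC)$) of a constant point $b \in \check D(\CC)$. The strategy is to express the three conditions — "$\widetilde a$ lifts $f(a)$", "$\widetilde a$ lies in a $\bG(\CC)$-conjugate of an $\bH$-orbit", and "the base point $b$ is constant" — each via the generalized Schwarzian machinery recalled just above, and then eliminate the lift $\widetilde a$ and the auxiliary data $g,h,b$.

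Let me outline the steps. First I would work upstairs on $\check D$. The condition $\widetilde a = h^g \cdot b$ with $h \in \bH(\cM)$, $g \in \bG(\CC)$, $b \in \check D(\CC)$ says that $\widetilde a$ lies in the $\bG(\CC)$-orbit (under the twisted action $x \mapsto g^{-1} x$, absorbing $g$) of a point of the form $h \cdot b$; more invariantly, applying the generalized Schwarzian $\sS_{\bG,\check D}$ to $\widetilde a$, the condition becomes: $\sS_{\bG,\check D}(\widetilde a)$ lies in the image under $\sS_{\bG,\check D}$ of the $\cM$-points of the constant-coefficient $\bH$-orbits, i.e. of points $h \cdot b$ with $h \in \bH(\cM)$, $b \in \check D(\CC)$. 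Concretely, the $\bG(\CC)$-orbit of $h\cdot b$ is $\bG(\CC) h \cdot b$, and since $\bH \leq \bG$, for fixed constant $b$ this is a sub-ind-variety of $\check D$ whose $\nabla_m$-image modulo $\bG(\CC)$ is the Schwarzian of the orbit map $\bH \to \check D$, $h \mapsto h \cdot b$. Using the constant part functor $(-)^\partial$ to pin down that $b \in \check D(\CC)$ (i.e. $\nabla_1 b$ lands in the zero section), one assembles a differentially constructible subset $\Theta_\bH \subseteq \check D$ whose $\cM$-points are precisely the $\widetilde a$ of the desired form. The point that $m$ can be chosen uniformly large enough for all of $\sS_{\bG,\check D}$, $\sS_{\bH,\check D}$ (applied to orbit maps through constant base points) simultaneously is \cite[Proposition 3.9]{Sc-ade}.

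Next I would descend along $q$. Recall from the discussion preceding the lemma that $\chi = \sS_{\bG,\check D} \circ q^{-1}$ is differentially constructible on $S_{\Gamma,G,M}$, hence $\chi \circ f$ is differentially constructible on $S$ (here $f$ is bi-algebraic, hence algebraic after the standing reduction, so pulling back preserves differential constructibility). The fibre product $\{(a,b) \in F \times S(\CC) : q(a) = f(b)\}$ from Definition \ref{bi-algebraic-def} lets us express "there exists $\widetilde a \in \check D(\cM)$ with $q(\widetilde a) = f(a)$ and $\widetilde a \in \Theta_\bH$" in terms of the Schwarzian data of $a$ alone: the condition $q(\widetilde a) = f(a)$ together with $\widetilde a \in \Theta_\bH$ translates, under $\sS_{\bG,\check D}$, to a condition on $\chi(f(a)) = \sS_{\bG,\check D}(\widetilde a)$, namely that it lie in the image of $\Theta_\bH$ under $\sS_{\bG,\check D}$ — and this image is again differentially constructible over $\CC$ because $\bG$, $\bH$, $\check D$, and the orbit maps are all defined over $\CC$. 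Defining $\Xi_\bH$ to be the preimage of this set under the differentially constructible map $a \mapsto \chi(f(a))$ gives a differentially constructible subset of $S$ defined over $\CC$, and unwinding the definitions verifies the stated equivalence for $\cM$-points.

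The main obstacle I anticipate is the bookkeeping around quantifier elimination in the differential-algebraic (not differentially closed) setting: a priori "$\exists \widetilde a \in \check D(\cM)$" is an existential quantifier over $\cM$-points, and one needs that the projection of a differentially constructible set is differentially constructible. Rather than invoking model-theoretic quantifier elimination for $\mathrm{DCF}_0$ directly (which would produce a set defined over the differential closure, not over $\CC$, and whose $\cM$-points might differ from its $\mathrm{DCF}_0$-points), the right move is to eliminate $\widetilde a$ geometrically: the Schwarzian $\sS_{\bG,\check D}$ is constant on $q$-fibres, so $\sS_{\bG,\check D}(\widetilde a)$ depends only on $f(a)$, which is exactly why $\chi$ is well-defined and differentially constructible; this collapses the existential quantifier over lifts into an equality of Schwarzian data. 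The remaining existential quantifiers over $g \in \bG(\CC)$, $h \in \bH(\cM)$, $b \in \check D(\CC)$ are absorbed into the construction of $\Theta_\bH$ via the orbit-quotient $\bG \backslash \cA_m \check D$ and the constant-part functor, where they are genuinely accounted for by \cite[Proposition 3.9]{Sc-ade} and the constructibility of $\bG \backslash \cA_m X$; care is needed to check that taking the $\bH$-orbit through a \emph{varying-in-$\cM$} but \emph{constant-as-a-point} base $b$ still yields a differentially constructible family, which follows by working with the orbit map $\bH \times \check D^\partial \to \check D$ and applying the Schwarzian construction to it.
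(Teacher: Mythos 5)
Your approach matches the paper's: the set you call $\Theta_\bH$ is the paper's $\widetilde{\Xi}_\bH = \bG^\partial \cdot \bH \cdot \check{D}^\partial$, exhibited directly as the algebraic constructible set $\widetilde{\Xi}_{\bH,m} := \bG^0_m \cdot \cA_m \bH \cdot \check{D}^0_m \subseteq \cA_m \check{D}$ for $m$ large, with its image $\underline{\widetilde{\Xi}_{\bH,m}}$ in the Schwarzian variety pulled back along $\chi$ to define $\Xi_\bH$. Your anticipated obstacle about eliminating the existential quantifier over lifts is resolved exactly as you propose—$\chi = \sS_{\bG,\check{D}} \circ q^{-1}$ is well-defined and differentially constructible because $\sS_{\bG,\check{D}}$ is $\bG(\CC)$-invariant and hence $\Gamma$-invariant—while the quantifiers over $g$, $h$, $b$ never arise as projections over $\cM$-points, being absorbed into the purely algebraic description of $\widetilde{\Xi}_{\bH,m}$ in the arc space.
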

\begin{proof}
Let $\widetilde{\Xi}_\bH \subseteq \check{D}$ be 
defined by $\widetilde{\Xi}_\bH = \bH^{\bG^\partial} \cdot \check{D}^\partial	
= \bG^\partial \cdot \bH \cdot \check{D}^\partial$.   This differential constructible 
set is defined by $\widetilde{\Xi}_{\bH,m} := \bG^0_m \cdot \cA_m \bH \cdot \check{D}^0_m \subseteq \cA_m \check{D}$ 
for $m$ large enough.  Letting $\underline{\widetilde{\Xi}_{\bH,m}}$ be the image of 
$\widetilde{\Xi}_{\bH,m}$ in the Schwarzian variety, we see that $\widetilde{\Xi}_\bH$ is 
defined by the differential equation $\sS_{\bG,\check{D}}(x) \in \underline{\widetilde{\Xi}_{\bH,m}}$.  
Let $\Xi_H$ be defined by $\chi(x) \in \underline{\widetilde{\Xi}_{\bH,m}}$.  If $a \in \Xi_\bH(\cM)$, 
let $\widetilde{a} = q^{-1} f(a)$ for any choice of a branch of $q^{-1}$.  Then 
$\chi(a) = \sS_{\bG,\check{D}}(\widetilde{a})$ so that $\widetilde{a} \in \widetilde{\Xi}_\bH$.  
Thus, there is some $g \in \bG(\CC)$, $b \in \check{D}(\CC)$ and $h \in \bH(\cM)$ with 
$\widetilde{a} = h^g \cdot b$.   Conversely, if $a \in S(\cM)$ and $f(a)$ lifts 
to some $\widetilde{a} \in \check{D}(\cM)$ for which there are $g \in \bG(\CC)$, 
$b \in \check{D}(\CC)$, and $h \in \bH(\cM)$ with $\widetilde{a} = h^g \cdot b$, then 
$\chi(a) = \sS_{\bG,\check{D}}(\widetilde{a}) \in \underline{\widetilde{\Xi}_{\bH,m}}$ so that 
$a \in \Xi_\bH(\cM)$ as claimed.
\end{proof}

\begin{Rk}
The differential constructible sets $\Xi_\bH$ and 
$\widetilde{\Xi}_\bH$ are not closed in general.
For each $d \leq \dim \bB$, consider 
$\widetilde{\Xi}_{\bH,d}$ defined by 
$$\widetilde{\Xi}_{\bH,d}(\cM) := 
\{ a \in \widetilde{\Xi}_{\bH}(\cM) ~:~ 
\dim \operatorname{Stab}_{\bG(\CC)} (a) \geq d \} \text{ .}$$ 
We let $\Xi_{\bH,d}$ be defined by 
$$\Xi_{\bH,d}(\cM) := \{ a \in \Xi_\bH(\cM) ~:~
(\exists \widetilde{a} \in \widetilde{\Xi}_{\bH,d}(\cM)
~:~ q(\widetilde{a}) = f(a) \} \text{ .}$$   Then 
$\Xi_{\bH,d}$ is differential algebraic and 
is closed in $X \smallsetminus \Xi_{\bH,d+1}$
(where we set $\Xi_{\bH,\dim \bB + 1} = 
\varnothing$).   We note that $\widetilde{\Xi}_{\bH,\dim \bB} = \check{D}^\partial$.  
\end{Rk}

We can identify the semiconstant points
using the differential constructible sets $\Xi_\bH$.
If we permit ourselves to regard the trivial group 
as a semisimple $\QQ$-algebraic group, then 
we may see $S^\partial$ as $\Xi_{\{1\}}$.   More
generally, if $S' \subseteq S_{\Gamma,G,M}$ is a 
semiconstant weakly special variety, then 
there are connected semisimple $\QQ$-algebraic 
groups $\bH_1$ and $\bH_2$ with $\dim \bH_1 > 0$,
a map of algebraic groups $\iota:\bH_1 \times 
\bH_2 \hookrightarrow \bG$ with finite kernel and a point $a \in D$ with 
$S' = q(\iota(\{ 1 \} \times \bH_1)(\RR)^+ \cdot a)$.  
If we let $\bH' := \iota( \{ 1 \} \times \bH_2 )$, 
then we see that every analytic quotient space of the form 
$q((\bH')^g(\RR)^+ \cdot b)$ with $g \in \bG(\RR)$ and 
$b \in D$ is semiconstant.  As above, we see that there
is a finite set ${\mathcal{S}}{\mathcal{C}}$ of 
connected semisimple $\QQ$-algebraic subgroups of $\bG$ so 
that every such $\bH'$ is $\bG(\RR)$ conjugate to some 
element of ${\mathcal{S}}{\mathcal{C}}$.   For 
$\bH \in \mathcal{H}$, we define $\Xi^\ft_\bH 
:= \Xi_\bH \smallsetminus \bigcup_{\bH' \in \mathcal{S}\mathcal{C}} 
\Xi_{\bH'}$.   Then we see that $\Xi_\bH^\ft(\cM) = \Xi_\bH(\cM)^\ft$ 
as the notation suggests.
If we define $\Xi^{[\ell]}{}^\ft := \bigcup_{\bH \in \cH^{[\ell]}} 
\Xi_\bH^\ft$, then we see that $\Xi^{[\ell]}{}^\ft(\cM) = \Xi^{[\ell]}(\cM)^\ft$

While in Lemma~\ref{H-orbit-eq} we required merely that 
$f:S \to S_{\Gamma,G,M}$ be bi-algebraic, to analyze the differential
varieties $\Xi_H$ we need a form of the Ax-Schanuel conjecture to 
hold.

\begin{Def}
\label{def:AS}
Let $f:S \to S_{\Gamma,G,M}$ be a bi-algebraic map to the 
complex quotient space $S_{\Gamma,G,M}$. We say that
$f:S \to S_{\Gamma,G,M}$ satisfies the Ax-Schanuel condition 
if whenever $a \in S(\cM)$ and $\widetilde{a} \in D(\cM)$ satisfy
$f(a) = q(\widetilde{a})$, we have $\trdeg_\CC \CC(a,\widetilde{a}) 
\geq \dim D + 1$ or $f(a)$ lies on a weakly special subvariety of $S_{\Gamma,G,M}$.  
\end{Def}

It is known that the period mapping associated to a polarized variation 
of Hodge structures is bi-algebraic~\cite[Theorem 1.3]{BKT} and 
satisfies the Ax-Schanuel condition~\cite[Theorem 1.1]{BaTs}.   In fact, 
the period mapping for variations of 
mixed Hodge structures satisfies the Ax-Schanuel
condition~\cite[Theorem 1.2]{Chiu21} or~\cite[Theorem 1.1]{GK}. 

We prove now a qualitative theorem towards Zilber-Pink on unlikely intersections
for the differential equations satisfied 
by the special varieties.

\begin{thm}
\label{thm:ZP-H}
Let $f:S \to S_{\Gamma,G,M}$ be a bi-algebraic map 
for which each Cartesian power $f^{\times N}:S^{\times N} 
\to S_{\Gamma,G,M}^{\times N}$
satisfies the Ax-Schanuel condition. Let 
$\mathbf{H} \leq \mathbf{G}$ be a nontrivial connected algebraic 
subgroup.  Let $\ell := \dim_\CC H(\RR) \cdot a$ for 
some (equivalently, any) $a \in D$.  
Let $X \subseteq S_\cM$ be an irreducible 
subvariety of the base change of $S$ to $\cM$ for 
which $f(X)$ is not contained in any proper weakly
special variety.  We suppose
that $\ell + \dim(X) < \dim \check{D}$.  
Then $\Xi_\bH^\ft \cap X$ is not 
Zariski dense in $X$.  In fact, there is a finite 
set $\mathcal{E}$ of proper $\cM$-weakly special
subvarieties of $S$ so that $\Xi_\bH^\ft \cap X
\subseteq X \cap \bigcup_{\widetilde{S} \in \mathcal{E}} f^{-1} \widetilde{S} \subsetneq X$.   
\end{thm}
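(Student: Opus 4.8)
The plan is to prove that $\Xi_\bH^\ft \cap X$ is not Zariski dense by a dimension count in the product $S \times \check D$, exploiting the Ax--Schanuel condition for Cartesian powers of $f$ together with the structure of $\widetilde\Xi_\bH = \bG^\partial \cdot \bH \cdot \check D^\partial$ described in Lemma~\ref{H-orbit-eq}. First I would set up notation: let $Z := \overline{\Xi_\bH^\ft \cap X}$ be the Zariski closure inside $X$, and suppose for contradiction that $Z = X$. Choose a point $a \in \Xi_\bH^\ft(\cM) \cap X(\cM)$ which is generic in $X$ over $\CC$ in the differential-algebraic sense (using the Seidenberg embedding theorem to realize a suitable differential field inside $\cM$, and using that $\Xi_\bH^\ft \cap X$ is dense so that a generic point of $X$ can be taken inside it). By Lemma~\ref{H-orbit-eq} there are $\widetilde a \in \check D(\cM)$, $b \in \check D(\CC)$, $g \in \bG(\CC)$ and $h \in \bH(\cM)$ with $q(\widetilde a) = f(a)$ and $\widetilde a = ghg^{-1}\cdot b$. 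The key geometric observation is that $\widetilde a$ lies in the orbit $(\bH^g)(\cM)\cdot b$, an irreducible variety over $\CC(b,g)$ — hence over $\CC$ after absorbing the constants $b,g$ — of dimension exactly $\dim_\CC \bH(\RR)\cdot a = \ell$ (here $\ell$ is the dimension of the $\bH$-orbit in $D$, which equals the generic orbit dimension). Therefore $\trdeg_\CC \CC(a,\widetilde a) \le \trdeg_\CC \CC(a) + \trdeg_{\CC(a)} \CC(a)(\widetilde a) \le \dim(X) + \ell < \dim \check D = \dim D$.

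Now the Ax--Schanuel condition for $f$ itself (the case $N=1$) applied to the pair $(a,\widetilde a)$ forces $f(a)$ to lie on a proper weakly special subvariety of $S_{\Gamma,G,M}$, since the transcendence degree bound $\ge \dim D + 1$ fails. This already shows that every fully transcendental point of $\Xi_\bH \cap X$ maps into the countable union of proper weakly special subvarieties; what remains — and this is the heart of the theorem — is to upgrade ``$f(a)$ lies on \emph{some} proper weakly special subvariety'' to ``$\Xi_\bH^\ft \cap X$ is contained in a \emph{finite} union $\bigcup_{\widetilde S \in \mathcal E} f^{-1}\widetilde S$ with each $\widetilde S \subsetneq f(X)$-respecting, i.e. $X \cap f^{-1}\widetilde S \subsetneq X$''. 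The finiteness is where I expect to spend the most effort. The standard mechanism, going back to Pila--Zannier-type arguments and used in the Ax--Schanuel deductions of Zilber--Pink (cf.\ the Bakker--Tsimerman / Mok--Pila--Tsimerman framework), is: the weakly special subvarieties of $S_{\Gamma,G,M}$ through a point of $f(X)$ come from algebraic subgroups of $\bG$, of which there are only finitely many conjugacy classes of each dimension; so the relevant weakly special subvarieties containing a given positive-dimensional family lie in finitely many algebraic families. One then runs an inductive argument on $\dim X$: if the generic point $a$ of $X$ lies in $\Xi_\bH^\ft$, the above shows $f(a)$ lies on a proper weakly special subvariety $\widetilde S_a$ varying in a constructible family; by Noetherianity and the hypothesis that $f(X)$ is not contained in any proper weakly special subvariety, no single $\widetilde S$ can contain $f(X)$, so $X \cap f^{-1}\widetilde S \subsetneq X$ for each, and a compactness/Noetherian argument in the parameter space of the family produces the finite set $\mathcal E$.

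Concretely I would argue as follows. The weakly special subvarieties of $S_{\Gamma,G,M}$ are indexed (up to the action of $\Gamma$) by pairs (algebraic subgroup $\bG' \le \bG$, point of $D$); grouping by the finitely many $\bG(\RR)$-conjugacy classes of connected algebraic subgroups $\bG'$ of each dimension $< \dim D$, one gets finitely many algebraic families $\{\widetilde S_{t} : t \in T_i\}$ of proper weakly special subvarieties, with $T_i$ quasi-projective. The locus $\{(t,x) \in T_i \times X : f(x) \in \widetilde S_t\}$ is a constructible subset of $T_i \times X$; project to $X$ to get constructible sets $W_i \subseteq X$ whose union, over all $i$, contains $\Xi_\bH^\ft \cap X$ by the Ax--Schanuel step. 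Since each $W_i$ is constructible and $\Xi_\bH^\ft \cap X$ is Zariski dense in $X$ by assumption, some $W_i$ is Zariski dense, hence contains a dense open of $X$; but then a very general point $x$ of $X$ lies on $\widetilde S_t$ for some $t$, and since $\{\widetilde S_t\}$ is a family of proper subvarieties, a Baire/irreducibility argument (or: the union $\bigcup_t \widetilde S_t$ over the fiber of $T_i$ relevant to $X$ is constructible and if dense then, as $f(X)$ is irreducible and not weakly special, one extracts finitely many $t_1,\dots,t_r$ with $f(X) \subseteq \bigcup_j \widetilde S_{t_j}$, forcing $f(X) \subseteq \widetilde S_{t_j}$ for one $j$, a contradiction) gives the conclusion. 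I would then take $\mathcal E$ to be this finite collection $\{f^{-1}\widetilde S_{t_j}\}$, intersected down so each is proper in $X$; the Cartesian-power hypothesis on Ax--Schanuel is what is needed to make the parametrized version of the transcendence estimate uniform over the family (applying $f^{\times N}$ to tuples of points of $X$ lying on a common $\widetilde S_t$), which is the technical role it plays. The main obstacle, to restate, is precisely this passage from the pointwise Ax--Schanuel output to a genuinely finite list of weakly special subvarieties — handling the family of possible $\widetilde S$'s uniformly and ruling out, via the ``$f(X)$ not weakly special'' hypothesis, that any of them swallows all of $X$.
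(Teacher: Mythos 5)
Your proposal has a genuine gap in its core dimension count. You claim
\[
\trdeg_\CC \CC(a,\widetilde a) \le \trdeg_\CC \CC(a) + \trdeg_{\CC(a)}\CC(a)(\widetilde a) \le \dim(X) + \ell ,
\]
but the bound $\trdeg_\CC \CC(a)\le\dim(X)$ is only valid when $X$ is defined over $\CC$. Here $X$ is a subvariety of $S_\cM$; if $L\subset\cM$ is a finitely generated field of definition for $X$, a generic point $a$ of $X$ will typically satisfy $\trdeg_\CC\CC(a)=\trdeg_\CC L+\dim(X)$, and the extra $\trdeg_\CC L$ can push the total past $\dim\check D$, so the single‑point Ax--Schanuel ($N=1$) cannot be applied to get the desired contradiction. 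In other words, the case of interest (varieties over a function field) is exactly the case where your estimate fails, and the case where it does work ($X$ over $\CC$) is the one the theorem does not need.

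This is precisely the role of the Cartesian‑power hypothesis in the paper's proof, and you have mislocated it: it is not an auxiliary tool for uniformizing a family argument at the end, but the mechanism that absorbs the defect $\trdeg_\CC L$. The paper works with each Kolchin‑irreducible component $Z$ of the differential constructible set $\Xi_\bH\cap X$ (Noetherianity of the Kolchin topology gives finitely many, which is where the finite set $\mathcal E$ comes from, without any family/Baire argument), takes a Morley sequence $(a_1,\dots,a_N)$ in $Z(\cM)$ over $L$ with $N=\trdeg_\CC L+1$, lifts to $(\widetilde a_i)$, and computes
\[
\trdeg_\CC \CC(\widetilde a_1,\dots,\widetilde a_N,a_1,\dots,a_N)\le \trdeg_\CC L + N\ell + N\dim(X)\le N\dim\check D ,
\]
where the crucial point is that the field of definition contributes $\trdeg_\CC L$ \emph{once}, not $N$ times, so choosing $N$ large makes the per‑coordinate average drop below $\dim\check D$. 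Ax--Schanuel for $f^{\times N}$ then gives a proper special $S'\subseteq S_{\Gamma,G,M}^{\times N}$ through $(a_1,\dots,a_N)$; projecting along the first coordinate where $S'$ becomes proper yields an $\cM$‑weakly special $\widetilde S$ with $Z\subseteq f^{-1}\widetilde S$. Your Lemma~\ref{H-orbit-eq} observation that $\widetilde a$ lies on a constant‑conjugate $\bH$‑orbit of dimension $\ell$ (contributing $\trdeg\le\ell$ per lift) is correct and is used in the same way in the paper; what is missing is the Morley‑sequence device that neutralizes the function‑field defect, and without it the argument does not close.
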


\begin{proof}
Since the Kolchin topology is Noetherian, to find the set 
$\mathcal{E}$, it suffices to produce for each component 
$Z$ of the differential constructible set $\Xi_\bH \cap X$ 
some proper $\cM$-weakly special $\widetilde{S}$ with $Z \subseteq f^{-1} \widetilde{S}$. 
Let $Z$ be such a component of $\Xi_\bH \cap X$.  

Let $L$ be a finitely generated over $\CC$ subfield
of $\cM$ over which $X$ and $Z$ are defined.  
Let $N := \trdeg_\CC L + 1$.  Let $(a_i)_{i=1}^N$ be 
a Morley sequence in $Z(\cM)$ over $L$.   That is, 
$(a_1, \ldots, a_N) \in Z^{\times N}(\cM)$ and for every proper
differential subvariety $W \subsetneq Z^{\times n}$ defined over 
$L$, $(a_1, \ldots, a_N) \notin W(\cM)$.    (That such a sequence 
may be found in $Z^{\times N}(\cM)$ uses the fact that every point in $Z$ is 
fully transcendental.)  
Let $(\widetilde{a}_i)_{i=1}^N$ be a sequence of elements 
of $\check{D}(\cM)$ with $q(\widetilde{a}_i) = f(a_i)$.  
We compute an upper bound on $\trdeg_\CC(\widetilde{a}_1, 
\ldots, \widetilde{a}_N,a_1, \ldots, a_N)$. 

\begin{eqnarray*}
\trdeg_\CC \CC(\widetilde{a}_1, \ldots, \widetilde{a}_N,a_1, \ldots, a_N) & \leq &
	\trdeg_\CC L (\widetilde{a}_1, 
\ldots, \widetilde{a}_N,a_1, \ldots, a_N)  \\
 & = & \trdeg_\CC L + \trdeg_L 
 L(\widetilde{a}_1, \ldots, \widetilde{a}_N,a_1, \ldots, a_N)	\\
 & \leq & \trdeg_\CC L + \trdeg_L L (\widetilde{a}_1, \ldots, \widetilde{a}_N)  + 
 \trdeg_L L(a_1, \ldots, a_N) \\
 & \leq & N + N \ell + N \dim(X) \\
 & \leq &  N \dim (\check{D})  
\end{eqnarray*}

By the Ax-Schanuel condition, this is 
only possible if there is a proper 
special subvariety 
$S' \subseteq S_{\Gamma,G,M}^{\times N}$ with 
$(a_1, \ldots, a_N) 
\in (f^{\times N}){}^{-1}(S')(\cM)$.  
Let $j$ be minimal so that if $\pi_j: S_{\Gamma,G,M}^{\times N} 
\to S_{\Gamma,G,M}^{\times j}$  is the projection onto the 
first $j$ coordinates, then $\pi_j(S') 
\neq S_{\Gamma,G,M}^{\times j}$. The point $a_j$ is a 
generic point of $Z$ over the differential field generated by 
$L(a_1, \ldots, a_{j-1})$, but it also belongs to the 
$\cM$-weakly special variety $\widetilde{S} := f^{-1}(\rho( (\{ (f(a_1), \ldots, f(a_{j-1})) \} \times S_{\Gamma,G,M}) \cap S' ))$, where 
$\rho:S_{\Gamma,G,M}^{\times j} \to S_{\Gamma,G,M}$ is the projection to the last coordinate.     Thus, $Z \subseteq \widetilde{S}$.

\end{proof}

\begin{Rk}
If the map $f:S \to S_{\Gamma,G,M}$ satisfies 
a suitable Ax-Schanuel with derivatives theorem, 
as holds, for example, for Shimura varieties~\cite[Theorem 12.3]{MPT},
then  Theorem~\ref{thm:ZP-H} may be strengthened to a statement 
in which $X$ may be taken to be a differential variety. We will return to 
this point in Section~\ref{trivial-types}.	
\end{Rk}

\begin{Rk}
\label{rk:ft-to-nonconstant}
Working by induction on the dimension of $X$, we may upgrade Theorem~\ref{thm:ZP-H} to the assertion that $\Xi_\bH \cap (X(\cM) \smallsetminus X(\CC))$ 
is not Zariski dense in $X$.   However, to include the semiconstant points, we may be obliged to extend $\mathcal{E}$ to have a family of $\cM$-weakly 
special varieties parameterized by a the $\CC$-points of some algebraic variety.	  We spell out the details of this strengthening with Theorem~\ref{thm:constant-to-qual}.
\end{Rk}
 
The next lemma will permit us to capture all special varieties of a fixed dimension 
by finitely many differential varieties of the form $\Xi_\bH$.
 
\begin{lem}
\label{special-ODE}
Let $f:S \to S_{\Gamma,G,M}$ be bi-algebraic with 
$S_{\Gamma,G,M}$ an arithmetic quotient.  
For each natural number $\ell \leq \dim D$, 
there is a finite set $\cH^{[\ell]}$ of 
semisimple $\QQ$-algebraic subgroups of $\bG$ so that 
for each $\bH \in \cH^{[\ell]}$ there is some 
$a \in D$ with $q(\bH(\RR)^+ \cdot a) 
\subseteq S_{\Gamma,G,M}$ being a
special variety of dimension $\ell$
and 
$\cS_S^{[\ell]}(\cM) \subseteq \bigcup_{\bH \in 
\cH} \Xi_\bH(\cM)$.
\end{lem}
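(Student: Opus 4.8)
The plan is to reduce the statement to a finiteness result about semisimple $\QQ$-subgroups of $\bG$ up to $\Gamma$-conjugacy, and then to verify the containment $\cS_S^{[\ell]}(\cM) \subseteq \bigcup_{\bH \in \cH^{[\ell]}} \Xi_\bH(\cM)$ using Lemma~\ref{H-orbit-eq}. First I would recall the description of special subvarieties of an arithmetic quotient $S_{\Gamma,G,M}$: a special subvariety arises as $q(\bH(\RR)^+ \cdot \widetilde{x})$ where $\bH \leq \bG$ is a semisimple $\QQ$-algebraic subgroup such that $\bH(\RR)^+ \cdot \widetilde{x}$ is (the image under $q$ of) a homogeneous subdomain, and $\widetilde{x} \in D$ is a point whose stabilizer data make this well-defined; the dimension of the special variety is $\dim_\CC \bH(\RR)^+ \cdot \widetilde{x}$. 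The key classical input is that, up to conjugacy by $\Gamma$ (equivalently, for the purpose of sweeping out $\cS_S^{[\ell]}$), there are only finitely many $\bH$-orbits realizing special subvarieties of a given dimension $\ell$ — this is the standard finiteness statement underlying the theory (the set of special subvarieties of bounded dimension falls into finitely many ``Hecke orbit families,'' each associated to a single $\QQ$-subgroup $\bH$ up to conjugacy). So I would define $\cH^{[\ell]}$ to be a set of representatives, one for each $\Gamma$-conjugacy class of such $\bH$, with the orbit dimension condition $\dim_\CC \bH(\RR)^+ \cdot a = \ell$; finiteness of $\cH^{[\ell]}$ is then exactly this classical fact, which I would cite (e.g., via the structure theory of Mumford–Tate domains, or via Ullmo–Yafaev / the general Hodge-theoretic account).

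Next I would establish the containment. Take $a \in \cS_S^{[\ell]}(\cM)$, so $f(a)$ lies on some special subvariety $V$ of $S_{\Gamma,G,M}$ of dimension $\ell$. By the description above, $V = q(\bH'(\RR)^+ \cdot b)$ for some semisimple $\QQ$-subgroup $\bH'$ and $b \in D$, with $\dim_\CC \bH'(\RR)^+ \cdot b = \ell$. By the finiteness just set up, there is $\bH \in \cH^{[\ell]}$ and $\gamma \in \Gamma$ (or more generally $g \in \bG(\CC)$, which is all we need) with $\bH' = \bH^{g^{-1}}$, i.e. $g\bH' g^{-1} = \bH$ after suitable bookkeeping; hence $\widetilde{a}$, a lift of $f(a)$ to $\check D(\cM)$ chosen inside the orbit $\bH'(\RR)^+ \cdot b \subseteq D$, can be written as $\widetilde{a} = h \cdot b$ with $h \in \bH'(\cM) = (\bH^g)(\cM)$, and therefore $\widetilde{a} = h'^{g} \cdot b$ with $h' \in \bH(\cM)$. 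This is precisely the condition appearing in Lemma~\ref{H-orbit-eq} (with $b \in \check{D}(\CC)$ since $b \in D$ is a constant point, and $g \in \bG(\CC)$), so $a \in \Xi_\bH(\cM)$. Since $\bH \in \cH^{[\ell]}$, this gives $\cS_S^{[\ell]}(\cM) \subseteq \bigcup_{\bH \in \cH^{[\ell]}} \Xi_\bH(\cM)$, and by construction each $\bH \in \cH^{[\ell]}$ does realize a special variety of dimension $\ell$ via $q(\bH(\RR)^+ \cdot a)$ for an appropriate $a \in D$.

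The main obstacle I anticipate is the bookkeeping around base points and conjugating elements: a special subvariety is not just a group $\bH$ but a group together with an orbit through a point $b \in D$, and the point $b$ need not be the same for different special subvarieties even when $\bH$ is fixed up to conjugacy. The finiteness of $\cH^{[\ell]}$ requires that the ambiguity in $b$ be absorbed into the $\bG(\CC)$-action (which is harmless for membership in $\Xi_\bH$, since Lemma~\ref{H-orbit-eq} already builds in an arbitrary $g \in \bG(\CC)$ and an arbitrary constant base point $b \in \check D(\CC)$). Concretely I would argue: the $\bG(\RR)$-conjugacy class of $\bH'$ determines the orbit up to $\bG(\RR)$-translation, and the $\QQ$-structure forces $\bH'$ into one of finitely many $\Gamma$-conjugacy classes of semisimple $\QQ$-subgroups of the given orbit dimension — this is where I lean on the cited classical finiteness. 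One subtlety worth flagging explicitly: I should make sure the dimension of the orbit $\bH'(\RR)^+ \cdot b$ depends only on $\bH'$ up to conjugacy and not on $b$ (true because $\check D = \bG/\bB$ is homogeneous and the stabilizer in $\bH'$ of a generic point of $D$ has locally constant dimension along $\Gamma$-orbits), so that the dimension-$\ell$ condition can be imposed at the level of $\cH^{[\ell]}$; I would spell this out when defining $\cH^{[\ell]}$.
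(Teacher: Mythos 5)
There is a genuine gap. The finiteness you invoke is wrong as stated: you propose to take $\cH^{[\ell]}$ as a set of representatives, one per $\Gamma$-conjugacy class of the relevant semisimple $\QQ$-subgroups, and you justify finiteness by claiming there are only finitely many $\Gamma$-conjugacy classes. That claim is false. For example, in $S = Y(1)^2$ with $\Gamma = \operatorname{SL}_2(\ZZ)^2$, the Hecke correspondences $T_N$ give infinitely many special curves whose underlying semisimple $\QQ$-subgroups (twisted diagonal copies of $\operatorname{SL}_2$) are pairwise non-$\Gamma$-conjugate. So your proposed $\cH^{[\ell]}$ is not finite, and the lemma fails as you've set it up.

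The correct finiteness statement, which is what the paper uses (citing~\cite[Proposition 12.1]{Ri}), is about $\bG(\RR)$-conjugacy: there is a \emph{finite} set $\cH$ of connected semisimple $\QQ$-subgroups of $\bG$ such that every connected semisimple $\QQ$-subgroup of $\bG$ is $\bG(\RR)$-conjugate to a member of $\cH$. You do partially gesture at the fix when you write that ``$g \in \bG(\CC)$ is all we need'' for membership in $\Xi_\bH$, and again when you speak of ``the $\bG(\RR)$-conjugacy class of $\bH'$''; that instinct is right, since Lemma~\ref{H-orbit-eq} already absorbs an arbitrary $g \in \bG(\CC)$ and an arbitrary constant base point $b \in \check{D}(\CC)$, so $\bG(\RR)$-conjugacy is exactly the right equivalence to work modulo. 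But you then contradict this by defining $\cH^{[\ell]}$ via $\Gamma$-conjugacy representatives and by stating that ``the $\QQ$-structure forces $\bH'$ into one of finitely many $\Gamma$-conjugacy classes,'' which is not true. Replace the $\Gamma$-conjugacy finiteness throughout by the $\bG(\RR)$-conjugacy finiteness of~\cite[Proposition 12.1]{Ri}, define $\cH^{[\ell]}$ as those $\bH \in \cH$ some $\bG(\RR)$-conjugate of which realizes a special subvariety of dimension $\ell$, and the rest of your argument (expressing the special variety as $q(\bH'(\RR)^+ \cdot b)$, conjugating $\bH'$ into $\cH$, and then invoking Lemma~\ref{H-orbit-eq}) goes through and matches the paper's proof.
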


\begin{proof}
As is well known 
(see, for instance,~\cite[Proposition 12.1]{Ri}), 
there is finite set $\cH$ of connected 
semisimple $\QQ$-algebraic
subgroups of $\bG$ so that for any connected 
semisimple $\QQ$-algebraic subgroup $\widetilde{\bH} \leq \bG$ 
of $\bG$ there is some $\bH \in \cH$ and $g \in 
\bG(\RR)$ with $\widetilde{\bH} = \bH^g$.  Let 
$$\cH^{[\ell]} := \{ \bH \in \cH ~:~ (\exists 
a \in D, g \in \bG(\RR)) ~q(\bH^g(\RR)^+ \cdot a) \subseteq 
S_{\Gamma,G,M} $$ $$ \text{ is a special subvariety of dimension } \ell \} 
\text{ .}$$
If $a \in \cS_S^{[\ell]}(\cM)$, then there is some 
special subvariety $S' \subseteq S_{\Gamma,G,M}$ of
dimension $\ell$ so that $f(a) \in S'$.  Express
$S'$ as $q(\bH'(\RR)^+ \cdot a)$ for some $a \in D$ and
semisimple $\QQ$-algebraic subgroup $\bH' \leq \bG$.
We then find $\bH \in \cH$ and $g \in \bG(\RR)^+$
so that $\bH' = \bH^g$, giving that $\bH \in 
\cH^{[\ell]}$.  By Lemma~\ref{H-orbit-eq}, 
$a \in \Xi_H(\cM)$, as claimed.
\end{proof}

An effective Zilber-Pink theorem may be 
deduced from Theorem~\ref{thm:ZP-H}.   

\begin{cor}
\label{effective-ZP}
Let $f:S \to S_{\Gamma,G,M}$ be bi-algebraic with 
$S_{\Gamma,G,M}$ an arithmetic quotient. 
We suppose that $S$ is given with a fixed quasi-
projective embedding.   
Then there is a constant $C$ so that for any 
natural number $\ell$ and  
any irreducible subvariety $X \subseteq S_\cM$
with $\dim(X) + \ell < \dim(S)$,
there is a proper subvariety $Y \subsetneq X$ with   
$(X(\cM) \smallsetminus X(\CC)) \cap \cS_S^{[\ell]} \subseteq Y$ and
$\deg(Y) \leq C \deg(X)^{\dim(S)}$. 
\end{cor}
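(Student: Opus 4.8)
The plan is to deduce the corollary from the qualitative Theorem~\ref{thm:ZP-H}, using Lemma~\ref{special-ODE} to replace the infinitely many strongly special subvarieties of dimension $\ell$ by finitely many of the differential varieties $\Xi_\bH$, and then to make this effective by invoking the degree bounds for differential constructible sets of~\cite{Bin,HrPi}. As in Theorem~\ref{thm:ZP-H}, one must assume that $f(X)$ is not contained in a proper weakly special subvariety of $S_{\Gamma,G,M}$ (this hypothesis, present in the formulation in the introduction, is needed: if $X$ is itself a strongly special subvariety of dimension $\ell$ then every nonconstant point of $X$ already lies in $\cS^{[\ell]}_S$). Fix $\ell$ with $\dim(X)+\ell<\dim(S)$. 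By (the proof of) Lemma~\ref{special-ODE} there is a finite set $\cH^{[\ell]}$ of connected semisimple $\QQ$-algebraic subgroups of $\bG$, a subset of the fixed finite set $\cH$ depending only on $S_{\Gamma,G,M}$, with $\cS^{[\ell]}_S(\cM)\subseteq\bigcup_{\bH\in\cH^{[\ell]}}\Xi_\bH(\cM)$; for each $\bH\in\cH^{[\ell]}$ the orbits $\bH(\RR)^+\cdot a$ have complex dimension $\ell$, and since $S$ embeds in $S_{\Gamma,G,M}$ we have $\ell+\dim(X)<\dim(S)\le\dim\check{D}$. Set $V:=\big(\bigcup_{\bH\in\cH^{[\ell]}}\Xi_\bH\big)\cap X$, a differential constructible subset of $S_\cM$, let $V_0\subseteq V$ be the Kolchin-open subset of its nonconstant points (those $a\in V$ with $a\notin X(\CC)$), and put $Y:=\overline{V_0}$, the Zariski closure of $V_0$. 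Then $(X(\cM)\smallsetminus X(\CC))\cap\cS^{[\ell]}_S\subseteq V_0(\cM)\subseteq Y$ by construction.

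First I would check that $Y\subsetneq X$. Each Cartesian power of $f$ satisfies the Ax-Schanuel condition in the situations of interest --- for the period maps of polarized variations of Hodge structure this is~\cite{BaTs}, applied to external products --- so Theorem~\ref{thm:ZP-H} applies to every $\bH\in\cH^{[\ell]}$ and shows $\Xi^\ft_\bH\cap X$ is not Zariski dense in $X$; by the inductive strengthening recorded in Remark~\ref{rk:ft-to-nonconstant}, $\Xi_\bH\cap(X(\cM)\smallsetminus X(\CC))$ is not Zariski dense in $X$ either. Since the Zariski closure of a finite union is the union of the Zariski closures, $Y=\bigcup_{\bH\in\cH^{[\ell]}}\overline{\Xi_\bH\cap(X(\cM)\smallsetminus X(\CC))}$, a finite union of proper subvarieties of the irreducible variety $X$, hence $Y\subsetneq X$.

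It remains to bound $\deg(Y)$, and the key point is that the $\Xi_\bH$ have complexity bounded independently of $X$ and $\ell$. By Lemma~\ref{H-orbit-eq}, $\Xi_\bH$ is defined over $\CC$ by a condition $\chi(x)\in\underline{\widetilde{\Xi}_{\bH,m}}$, where $\chi=\sS_{\bG,\check{D}}\circ q^{-1}$ and $\underline{\widetilde{\Xi}_{\bH,m}}$ is the image in the Schwarzian variety of $\bG^0_m\cdot\cA_m\bH\cdot\check{D}^0_m$; the prolongation order $m$ and the degrees of all equations involved depend only on the action $\bG\curvearrowright\check{D}$ and the fixed presentation of $S_{\Gamma,G,M}$, by the explicit constructions of the generalized Schwarzian in~\cite{Sc-ade,MPT}, while $\bH$ runs through the fixed finite set $\cH$. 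Hence $V_0$ is cut out by differential polynomials of order at most some $h=h(S)$ and degree at most $\max(\deg(X),D_0)$ for a constant $D_0=D_0(S)$, in a fixed number of variables (the embedding dimension). The effective B\'ezout-type bounds of~\cite{Bin,HrPi} for the degree of the Zariski closure of a differential constructible set then produce an explicitly computable bound on $\deg(Y)$; tracking the dependence on the data of $S$, this may be taken of the form $C\,\deg(X)^{\dim(S)}$ with $C=C(S)$, and taking this $Y$ finishes the proof.

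The main obstacle is this last step. One needs effective control of the prolongation order and of the degrees occurring in the generalized Schwarzian of $\bG\curvearrowright\check{D}$ --- so the serious input is the effectivity built into~\cite{Sc-ade,MPT} --- and then one must propagate these constants, together with $\deg(X)$, through the elimination of the jet coordinates $(x',x'',\dots)$ in~\cite{Bin,HrPi}; it is in this propagation that the exponent $\dim(S)$ appears. A secondary point requiring care is the bookkeeping behind Remark~\ref{rk:ft-to-nonconstant} and Theorem~\ref{thm:constant-to-qual}: passing from the fully transcendental locus to the full nonconstant locus introduces semiconstant weakly special subvarieties occurring in positive-dimensional families, and one must check that these too have degree controlled by a function of $\deg(X)$ --- which they do, being governed by the $\Xi_{\bH'}$ of bounded complexity.
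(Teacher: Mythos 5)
Your proof follows the same route as the paper's: Lemma~\ref{special-ODE} to replace $\cS^{[\ell]}_S$ by the finite union $\Xi^{[\ell]} = \bigcup_{\bH\in\cH^{[\ell]}}\Xi_\bH$, Theorem~\ref{thm:ZP-H} together with Remark~\ref{rk:ft-to-nonconstant} to conclude that the Zariski closure $Y$ of the nonconstant part of $X\cap\Xi^{[\ell]}$ is a proper subvariety of $X$, and the effective elimination bounds of~\cite{Bin} (the paper cites \cite[Corollary 11]{Bin} for this) to obtain $\deg(Y)\le C\deg(X)^{\dim(S)}$. So this is not a different argument but a more detailed write-up of the paper's one-paragraph proof.

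Two things you flag are worth noting. First, you are right that the corollary as stated in the paper omits a necessary hypothesis: Theorem~\ref{thm:ZP-H} requires that $f(X)$ not lie in a proper weakly special subvariety, and your counterexample (take $X$ to be the base change to $\cM$ of a strongly special subvariety of dimension $\ell$ with $2\ell<\dim S$, so that the nonconstant locus of $X$ is entirely contained in $\cS^{[\ell]}_S$) shows the statement is actually false without some such Hodge genericity assumption. The abstract says ``Hodge generic'' but both Corollary~\ref{effective-ZPY} and Corollary~\ref{effective-ZP} drop it; the paper's own proof silently invokes Theorem~\ref{thm:ZP-H} and therefore implicitly assumes it. Second, your discussion of where the effectivity actually lives --- the uniform bound on the prolongation order and degree of the equations defining $\Xi_\bH$ coming from~\cite{Sc-ade,MPT}, and the propagation of those constants and $\deg(X)$ through the elimination of jet coordinates in~\cite{Bin,HrPi} to produce the exponent $\dim(S)$ --- is more explicit than what the paper records and correctly locates the technical heart of the estimate.
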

\begin{proof} 
Let $\cH^{[\ell]}$ be given by Lemma~\ref{special-ODE}.
From that lemma, we see that $\cS_S^{[\ell]} \subseteq 
\bigcup_{\bH \in \cH^{[\ell]}} \Xi_\bH := \Xi^{[\ell]}$.     
By Theorem~\ref{thm:ZP-H} and Remark~\ref{rk:ft-to-nonconstant}, for each $\bH \in \cH^{[\ell]}$, 
the Zariski closure of $(X(\cM) \smallsetminus X(\CC)) \cap \Xi_\bH$
is a proper subvariety of $X$.  Hence, 
the Zariski closure $Y$ of $X \cap (\Xi^{[\ell]} 
\smallsetminus X^\partial)$ is a proper
subvariety of $X$ and contains $(X(\cM) \smallsetminus X(\CC)) \cap \cS_S^{[\ell]}$. 
By~\cite[Corollary 11]{Bin},
the degree of this Zariski closure $Y$ is bounded by
$C \deg(X)^{\dim(S)}$ where $C$ depends on $S$ and $f$, but
not on $X$. 
\end{proof}

\begin{Rk}
The constant $C$ appearing in 
Corollary~\ref{effective-ZP} may be computed from bounds on the 
degrees of the differential equations defining $\Xi^{[\ell]}$. 	
\end{Rk}

One might ask how far the $Y$ of Corollary~\ref{effective-ZP} is from 
the Zariski closure of $(X(\cM) \smallsetminus X(\CC)) \cap \cS_S^{[\ell]}$. 
The following conjecture implies that they are in 
fact equal.

\begin{conj}
\label{special-density}
Suppose that $f:S \to S_{\Gamma,G,M}$ is 
bi-algebraic, $X \subseteq S_\cM$ is an 
irreducible algebraic subvariety of the 
base change of $S$ to $\cM$, $\dim X > 0$, 
$\ell \in \ZZ_+$, 
and $X(\cM) \cap \Xi^{[\ell]}(\cM)^\ft$ is 
Zariski dense in $X$.  We assume moreover 
that $X$ is not contained in any $\cM$-weakly special
subvariety of $S$.    
Then $X(\cM) 
\cap \cS_S^{[\ell]}(\cM)^\ft$ is Zariski dense 
in $X$. 	
\end{conj}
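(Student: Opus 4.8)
The plan is to reduce to a single group $\bH \in \cH^{[\ell]}$, to read off from each point of $\Xi_\bH^\ft \cap X$ the complex algebraic orbit in $\check{D}$ on which its period lift lies, and then to deform these complex orbits into genuine special subvarieties while keeping incidence with $X$; the deformation should be possible precisely because the hypothesis forces us into the ``typical'' intersection regime.

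First I would use that $\Xi^{[\ell]}{}^\ft = \bigcup_{\bH \in \cH^{[\ell]}} \Xi_\bH^\ft$ is a finite union and that $X$ is irreducible, so that $X(\cM) \cap \Xi_\bH(\cM)^\ft$ is Zariski dense in $X$ for some fixed $\bH \in \cH^{[\ell]}$; for this $\bH$ there are $g_0 \in \bG(\RR)$ and $a_0 \in D$ with $q(\bH^{g_0}(\RR)^+ \cdot a_0)$ a special subvariety of dimension $\ell$. Since $f(X)$ is not contained in a proper $\cM$-weakly special subvariety, Theorem~\ref{thm:ZP-H} (in the cases of interest, where $f$ satisfies Ax--Schanuel for all Cartesian powers) shows that the density hypothesis is incompatible with $\ell + \dim X < \dim \check{D}$; hence $\ell + \dim X \geq \dim \check{D}$, and we are in the regime in which a dimension count predicts that the expected special intersections with $X$ actually occur.

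Next, for $a \in X(\cM) \cap \Xi_\bH(\cM)^\ft$, Lemma~\ref{H-orbit-eq} supplies $g \in \bG(\CC)$, $b \in \check{D}(\CC)$ and $h \in \bH(\cM)$ with period lift $\widetilde{a} = h^g \cdot b$ tracing out a subset of the complex orbit $O_{g,b} := \bH^g(\CC) \cdot b$, an algebraic subvariety of $\check{D}$; since $O_{g,b} \cap D$ is definable and $f$ is bi-algebraic, the o-minimal definable Chow theorem~\cite{PS-Crelle} shows that $a$ lies on the algebraic subvariety $W_{g,b} := \overline{f^{-1}(q(O_{g,b} \cap D))}^{\,\mathrm{Zar}} \subseteq S$, so the $W_{g,b}$ meeting $X$ cover a Zariski-dense subset of $X$. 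The core step is to isolate the pairs $(g,b)$ producing honest special subvarieties: these are the pairs $(\gamma,x)$ with $\gamma$ in the commensurator of $\Gamma$ and $x$ an $\bH$-special point of $D$, for which $W_{\gamma,x}$ is the Zariski closure of an $\ell$-dimensional special subvariety. One would then argue, from the Zariski density of $\bG(\QQ)^+$ in $\bG$, Borel density for $\Gamma$, and --- crucially --- the density in $D$ of the $\bH$-special points, that the pairs $(\gamma,x)$ are Zariski dense among all $(g,b)$; combining this with the fact that in the regime $\ell + \dim X \geq \dim \check{D}$ the algebraic incidence correspondence $\{((g,b),y)\;:\;y \in W_{g,b} \cap X\}$ dominates the $(g,b)$-parameter space, one would extract a Zariski-dense family of genuinely special $\ell$-dimensional $W_{\gamma,x}$ meeting $X$. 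Taking a generic point of each such intersection --- which is fully transcendental, its semiconstant sublocus lying in a proper subvariety by applying Theorem~\ref{thm:ZP-H} to the groups in $\mathcal{S}\mathcal{C}$ --- and realising these points in $\cM$ via the Seidenberg embedding theorem yields that $X(\cM) \cap \cS_S^{[\ell]}(\cM)^\ft$ is Zariski dense in $X$.

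The hard part will be the density in $D$ of the $\bH$-special points together with the passage from ``genuinely special orbits are Zariski dense among all complex orbits'' to ``genuinely special orbits incident to $X$ are Zariski dense in $X$''. Controlling the intersection of a fixed special subvariety with the transcendentally varying function-field variety $X$, and in particular ruling out that all such intersections collapse into a single proper subvariety of $X$, is an equidistribution-type statement of roughly the depth of the André--Oort phenomena and of the Galois/Hecke equidistribution results for special subvarieties; it does not appear to follow formally from the Ax--Schanuel theorem, which is why the statement is left as a conjecture.
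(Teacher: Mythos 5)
This statement is Conjecture~\ref{special-density}; the paper does not prove it, describing it instead as a ``likely intersections'' counterpart to Zilber--Pink whose variants have been studied by Klingler and Otwinowska. There is therefore no proof in the paper to compare against, and you yourself concede in your closing paragraph that the key density/equidistribution step does not follow from Ax--Schanuel, which is consistent with the paper leaving it open. Your write-up is a programme, not a proof.

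Beyond that concession, the incidence-correspondence mechanism you propose has a concrete breakdown even before one reaches the equidistribution step. For a generic pair $(g,b) \in \bG(\CC) \times \check{D}(\CC)$, the analytic set $q(O_{g,b} \cap D)$ is not algebraic, and its Zariski closure $W_{g,b}$ will in general be all of $S$ (this is essentially what Ax--Lindemann tells you: non-weakly-special analytic images have full Zariski closure). So the correspondence $\{((g,b),y) : y \in W_{g,b} \cap X\}$ degenerates over most of the parameter space, dominance of the $(g,b)$-parameters carries no information, and there is no semicontinuity by which to compare the full fibre $W_{g,b} \cap X = X$ at a generic point with the special fibres $W_{\gamma,x} \cap X$. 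Conversely, at a special $(\gamma,x)$, $W_{\gamma,x}$ is an $\ell$-dimensional variety defined over $\CC$ while $X$ is defined over $\cM$, and even in the regime $\ell + \dim X \geq \dim \check{D}$ the intersection can be empty; the conjecture is precisely the assertion that this does not happen densely, and neither Ax--Schanuel, definable Chow, nor density of special points in $D$ addresses it. (Note also that your derivation of $\ell + \dim X \geq \dim\check{D}$ via Theorem~\ref{thm:ZP-H} presumes that $f$ and all its Cartesian powers satisfy Ax--Schanuel, which the conjecture does not assume.)
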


Conjecture~\ref{special-density} may 
be understood as a ``likely 
intersections'' counterpart to
the Zilber-Pink conjecture.  
Variants have been studied
by Klingler and Otwinowska
in~\cite{KO}.

\section{Another approach to function field Zilber-Pink}
\label{sec:qualitative}

In this section we explain how a uniform version of the Zilber-Pink conjecture 
may be deduced from a weak version in which only varieties defined over $\CC$ are considered.

We start by specifying what we would mean by a weak Zilber-Pink conjecture.

\begin{Def}
\label{def:weak-ZP}
We say that weak Zilber-Pink holds for the bi-algebraic $f:S \to S_{\Gamma,G,M}$
if whenever $X \subseteq S$ is 
an irreducible complex algebraic subvariety of $S$  
for which $f(X)$ is not contained in a proper special subvariety of $S_{\Gamma,G,M}$, 
then the union of all strongly atypical components of intersections of $X$ with 
pullbacks of strongly special subvarieties of $S_{\Gamma,G,M}$ is not Zariski dense in $X$.	 
Here a component $U$ of $X \cap f^{-1}(\widetilde{S})$ is strongly atypical if 
$\dim(U) > \max \{ \dim(S_{\Gamma,G,M}) - (\dim(X) + \dim(\widetilde{S})), 0 \}$. 
\end{Def}

\begin{Rk}
The weak Zilber-Pink condition is weak in two senses:  we consider only subvarieties $X$ defined 
over $\CC$ and we make an assertion only about atypical components of dimension at least one.  It is strong in the sense that 
it is an assertion about atypical intersections and not merely unlikely intersections.  We discuss the apparent gap between 
atypical and unlikely intersections at the end of this section.  	
\end{Rk}

Our main result is that if weak Zilber-Pink holds for $f:S \to S_{\Gamma,G,M}$, then the function field version of Zilber-Pink for unlikely
intersections holds.
 
\begin{thm}
\label{thm:constant-to-qual}  If weak Zilber-Pink holds for $f:S \to S_{\Gamma,G,M}$, $X \subseteq S_\cM$ is an
irreducible algebraic subvariety of the base change of $S$ to $\cM$ for which $f(X)$ is not contained in any proper
special subvariety, then $(X(\cM) \smallsetminus X(\CC)) \cap \cS_S^{[\leq \ell]}(\cM)$ is not Zariski dense in $X$ where 
$\ell = \dim S_{\Gamma,G,M} - (\dim(X) + 1)$. 	
\end{thm}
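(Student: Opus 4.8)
The plan is to reduce the function-field statement to the over-$\CC$ weak Zilber-Pink hypothesis via a specialization argument, using the theory of pairs of fields $(\cM,\CC)$ in the style of Chatzidakis--Ghioca--Masser--Maurin~\cite{CGMM}. First I would set up the geometric objects: working over a finitely generated subfield $L \subseteq \cM$ containing a field of definition for $X$, I would pass to a model of $X$ over $L$, realize $\operatorname{Spec}(L)$ as a variety over $\CC$, and view $X$ as the generic fiber of a family $\mathcal{X} \to U$ of algebraic subvarieties of $S$ parameterized by a $\CC$-variety $U$, where the generic point of $U$ corresponds to the inclusion $L \hookrightarrow \cM$. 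The key point is that a nonconstant $\cM$-point $a \in (X(\cM) \smallsetminus X(\CC)) \cap \cS_S^{[\leq \ell]}(\cM)$ lying on some special subvariety $\widetilde{S}$ gives, after specialization, a $\CC$-point of a fiber $\mathcal{X}_u$ lying on (a specialization of) $\widetilde{S}$, and I would need to track how the dimension count $\ell = \dim S_{\Gamma,G,M} - \dim(X) - 1$ interacts with $\dim U$.

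Next I would run the dichotomy argument. Suppose for contradiction that $(X(\cM)\smallsetminus X(\CC)) \cap \cS_S^{[\leq\ell]}(\cM)$ is Zariski dense in $X$. Considering the total space $\mathcal{X} \subseteq S \times U$ and the union $W$ of all components of intersections $\mathcal{X} \cap (f^{-1}(\widetilde{S}) \times U)$ as $\widetilde{S}$ ranges over strongly special subvarieties, I would argue that Zariski density of the $\cM$-points on $X$ forces $W$ to project dominantly onto $X$ (equivalently, onto the generic fiber). Now apply weak Zilber-Pink \emph{in the total space}: since $f(X)$, hence $f(\mathcal{X})$, is not contained in a proper special subvariety, and since the relevant intersections in the family $\mathcal{X} \subseteq S \times U$ (or rather in $S$ after cutting a general fiber) are strongly atypical once we have accounted for the extra $\dim U$ parameters, the hypothesis tells us the union of strongly atypical components over $\CC$ is not Zariski dense. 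The subtle bookkeeping is that an intersection which is \emph{unlikely} but not atypical in a fiber becomes genuinely \emph{atypical} when viewed in the total space $\mathcal{X}$, precisely because nonconstancy of $a$ means the intersection point moves with $u$ and hence sits in a positive-dimensional component of the family-level intersection whose dimension exceeds the expected one; this is where the ``$X(\CC)$ is removed'' and the ``$+1$'' in the definition of $\ell$ are consumed. One then has to descend carefully through a stratification of $U$ and the special subvarieties $\widetilde{S}$ involved, using that there are only countably many strongly special subvarieties of each dimension and a Baire-category / Noetherianity argument to extract a single offending component.

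After that I would handle the semiconstant versus fully transcendental distinction: the set $\cS_S^{[\leq \ell]}$ includes semiconstant special subvarieties, but those are pullbacks along $f$ of varieties of the form $\rho(\{x\} \times S_{\Gamma_2,G_2,M_2})$, and their nonconstant points can be absorbed into a finite family of $\cM$-weakly special subvarieties by an argument analogous to Remark~\ref{rk:ft-to-nonconstant} together with Theorem~\ref{thm:ZP-H} applied inductively on $\dim X$; I would isolate the fully transcendental contribution and treat it by the specialization mechanism above, reducing to the strongly special case which is exactly what weak Zilber-Pink controls. Finally, I would invoke the CGMM results on the theory of the pair $(\cM,\CC)$: the set of $\cM$-points of $X$ that are algebraic over $\CC$-points of specializations and lie on special subvarieties is definable in the pair, and the structural theorems on definable sets in such pairs force it to be contained in a proper subvariety of $X$ unless density already fails at the $\CC$-level, contradicting weak Zilber-Pink.

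The main obstacle I expect is the dimension bookkeeping in the passage from fiberwise ``unlikely'' to family-level ``atypical'': one must verify that the strongly atypical components produced in the total space $\mathcal{X} \subseteq S \times U$ really do have dimension strictly exceeding $\max\{\dim S_{\Gamma,G,M} - (\dim \mathcal{X} + \dim \widetilde{S}), 0\}$ in the precise sense of Definition~\ref{def:weak-ZP}, and that this does not secretly require $X$ itself to be defined over $\CC$. Closely related is the need to ensure that the exceptional components do not degenerate into the ``diagonal'' behavior coming from special subvarieties of $S_{\Gamma,G,M}^{\times N}$ that are already accounted for — this is the analogue of the role played by the minimal-$j$ projection in the proof of Theorem~\ref{thm:ZP-H}, and getting a clean statement here, rather than just an existence assertion, is where most of the work lies.
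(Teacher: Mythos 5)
Your proposal is built around the right ingredients (CGMM and the nonconstancy/dimension bookkeeping), but there is a genuine gap at the crucial step: you propose to ``apply weak Zilber-Pink in the total space $\mathcal{X} \subseteq S \times U$,'' yet the hypothesis of the theorem gives weak Zilber-Pink only for subvarieties of $S$ itself with respect to $f:S \to S_{\Gamma,G,M}$; there is no version of the hypothesis for $S \times U$ (and $U$ is not a bi-algebraic variety, so you cannot manufacture one). Your intuition that fiberwise unlikely intersections become family-level atypical intersections is sound dimension-counting, but there is no Zilber-Pink statement assumed that applies to those family-level components. The idea you are missing is that instead of working in the total space, one should \emph{project}: take $Z$ to be the $\CC$-Zariski closure of $X$ in $S$, which is the smallest $\CC$-subvariety of $S$ with $X \subseteq Z_\cM$ (equivalently, essentially the closure of the image of your $\mathcal{X}$ under the projection $S \times U \to S$). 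Then $Z$ is a $\CC$-subvariety of $S$, it is not contained in a proper special subvariety since $f(X)$ is not, and weak Zilber-Pink applies directly to $Z$. The paper then splits the components $U$ of $Z \cap f^{-1}\widetilde{S}$ (for $\widetilde{S}$ strongly special, $\dim\widetilde{S} \leq \ell$) into those with $\dim U > \dim Z - (\dim X + 1)$, which one checks are strongly atypical for $Z$ and hence swept into a proper $\CC$-subvariety of $Z$ by weak ZP (and this cannot contain $X$ by minimality of $Z$), and those with $\dim U \leq \dim Z - (\dim X + 1)$, which are $\CC$-varieties of dimension less than $\operatorname{codim}_Z X$ and hence handled by CGMM. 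The two degenerate cases $Z_\cM = X$ and $Z = S$ are dispatched separately by weak ZP and CGMM respectively.

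A second problem is your proposed handling of the semiconstant versus fully transcendental distinction via Remark~\ref{rk:ft-to-nonconstant} and Theorem~\ref{thm:ZP-H}: Theorem~\ref{thm:ZP-H} requires the Ax-Schanuel condition for all Cartesian powers of $f$, which is not part of the hypothesis of the present theorem. The only assumptions available are weak Zilber-Pink (over $\CC$) and the ambient $\CC$-algebraicity of all special subvarieties, so any step that needs functional transcendence input has to be replaced. In the paper's argument this issue simply does not arise, because the split into $\mathcal{A}$ and $\mathcal{T}$ above together with CGMM already handles all nonconstant points on $\cS_S^{[\leq \ell]}$ without appealing to the differential-algebraic machinery of Section~\ref{DEs}.
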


\begin{proof}
Let $Z$ be the $\CC$-Zariski closure of $X$.  That is, $Z$ is the smallest subvariety of $S$ defined over 
$\CC$ with $X \subseteq Z_\cM$.   If $Z_\cM = X$, then weak Zilber-Pink already says that the conclusion 
we desire holds for $X$.   On the other hand, if $Z = S$, then~\cite[Theorem 1.2]{CGMM} implies that 
$X(\cM)^\ft \cap \cS_S^{[\leq \ell]}(\cM)$ is not Zariski dense in $X$.  Indeed,~\cite[Theorem 1.2]{CGMM} is stated 
with $\mathbb{A}^n$ as the ambient variety, but the proof applied mutatis mutandis for any given ambient 
variety.  In the notation of~\cite[Theorem 1.2]{CGMM}, we have $k = \mathbb{C}$ and $V = X$ and have replaced
$\mathbb{A}^n$ by $S$.    The result follows as every 
special subvariety of $S$ is defined over $\CC$,  so that 
$X(\cM)^\ft \cap \cS_S^{[\leq \ell]}(\cM) \subseteq X(\cM)^\ft \cap \bigcup_{ \tiny \begin{array}{c} Y \subseteq X \\
\CC-\text{algebraic subvariety} \\ \dim(Y) \leq \ell \end{array} } Y(\cM)$, which is not Zariski dense in $X$ by~\cite[Theorem 1.2]{CGMM}.

For the remainder of the proof we consider the case that $X \subsetneq Z_\cM \subsetneq S_\cM$.

We define two sets of irreducible varieties.

$$\mathcal{A} := \{ U \subseteq Z ~:~ U \text{ is a component of an intersection } Z \cap f^{-1} \widetilde{S} $$
$$ \text{ where } \widetilde{S} \text{ is strongly special with } \dim \widetilde{S} \leq \ell \text{ and } 
\dim(U) > \dim(Z) - (\dim X + 1) \}  $$ 

and 

$$\mathcal{T} := \{ U \subseteq Z ~:~ U \text{ is a component of an intersection } Z \cap f^{-1} \widetilde{S} $$
$$ \text{ where } \widetilde{S} \text{ is strongly special with } \dim \widetilde{S} \leq \ell \text{ and } 
\dim(U) \leq \dim Z - (\dim X  + 1) \}  $$

Observe that each $U \in \mathcal{A}$ is actually strongly atypical. Note that because $f(X)$ is not contained in 
a proper special variety, neither is $Z$.  Thus, by weak Zilber-Pink, $\bigcup \cA$ is not Zariski dense in 
$Z$.  Thus, $\overline{\bigcup \cA}$, being a proper subvariety of $Z$ defined over 
$\CC$, does not contain $X$.  Hence, $\bigcup A \cap X  \subseteq \overline{\bigcup A} \cap X$ is not 
Zariski dense in $X$.

On the other hand, each $U \in \mathcal{T}$ is a $\CC$-variety of dimension strictly less than the codimension 
of $X$ in $Z$.  By~\cite[Theorem 1.2]{CGMM}, $X \cap \bigcup \mathcal{T}$ is not Zariski dense in $X$.

For any special variety $\widetilde{S}$ with $\dim(S) \leq \ell$, we have $X \cap f^{-1} \widetilde{S} = 
X \cap (Z \cap f^{-1} \widetilde{S}) \subseteq X \cap (\bigcup \mathcal{A} \cup \bigcup \mathcal{T})$.  
Thus, $X \cap \cS_S^{[\leq \ell]} \subseteq \overline{X \cap \bigcup \mathcal{A}} \cup \overline{X \cap \bigcup{T}}$
which is not Zariski dense in $X$. 
\end{proof}

\begin{Rk}
 Note that our ostensibly stronger conclusion at the end of the proof of Theorem~\ref{thm:constant-to-qual}
 that $X \cap \cS_S^{[\leq \ell]}$ is not Zariski dense in $X$, rather than just that $(X(\cM) \smallsetminus X(\CC)) \cap \cS_S^{[\leq \ell]}$
 is not Zariski dense in $X$, is not really a strengthening as in the subcase under consideration $X(\CC)$ is not 
 Zariski dense in $X$ because $X$ did not descend to $\CC$.  	
\end{Rk}

\begin{Rk}
The quality of our conclusion in Theorem~\ref{thm:constant-to-qual} that the nonconstant unlikely intersections are not Zariski dense in 
$X$ appears to be weaker than what appears in the weak Zilber-Pink statement which is about atypical intersections.  In fact, in general, 
Zilber-Pink expressed in terms of atypical intersections is equivalent to Zilber-Pink in Pink's formulation which is expressed in terms
of unlikely intersections~\cite[Section 12]{BD21}.  The deduction
of Zilber-Pink  for atypical intersections from Zilber-Pink for unlikely intersections in \cite{BD21} uses the Ax-Schanuel property, which always holds in the required settings (see the references above below 3.3). 
It is part of the reduction effected there of ZP to finiteness
of ``optimal points''. 
Alternatively, the deduction can be made  by systematically intersecting with very general linear spaces.  All zero-dimensional atypical intersection components must be unlikely. Consider  
atypical components $A\subset X$ of some higher dimension $d$.
There are at most countably many as special subvarieties form
a countable collection. A very general linear subvariety
of codimension $d$ will intersect $X$ and all $A$ in the expected dimensions, the intersections will be distinct and unlikely.
\end{Rk}

\section{Trivial minimal types in differentially closed 
fields}
\label{trivial-types}

In~\cite{FrSc}, the Ax-Lindemann-Weierstra{\ss} with derivatives theorem of~\cite{Pi13} 
is interpreted to say that certain definable sets relative to the theory 
of differentially closed fields of characteristic zero are strongly minimal, have trivial forking geometry,
and have non-$\aleph_0$-categorical induced structure.   Up to this point in this paper 
we have used only the algebraic form of the Ax-Schanuel condition.
The main theorem of~\cite{MPT} gives functional transcendence statements
for algebraic differential equations for uniformizing maps of Shimura 
varieties generalizing the results for the $j$-function.  As such, we 
identify associated strongly minimal sets with forking geometry analogous to 
that of the differential equations for the $j$-function.  As with the 
results of~\cite{FrSc}, we leverage this interpretation to prove further functional 
transcendence theorems.  

Much more general Ax-Schanuel theorems are announced by 
Bl\'{a}zquez Sanz, Casale, Freitag, and Nagloo 
in~\cite{BSCFN}.  Analogous results on the model theoretic 
properties of the associated differential equations follow in 
each of the cases they consider.  In~\cite{BSCFN}, strong minimality and forking triviality 
for the differential equations associated to the covering maps of
simple Shimura varieties are established and (non-)orthogonality is 
described geometrically in much the same way as is done here (which is 
not surprising as our methods and theirs follow the analysis of~\cite{FrSc}).
A subtlety here is that we consider as well the case where the 
underlying Shimura variety is not simple, observing that there can be 
a real distinction between minimality and strong minimality related to 
the notion of $\delta$-Hodge genericity.

In this section, we use freely the ideas of 
geometric stability theory. 
See~\cite{GST} for details on such topics 
as U (also called ``Lascar'') rank, multiplicity, 
orthogonality, and Morley 
sequences. 

\begin{Def}
Let $f:S \to S_{\Gamma,G,M}$ be bi-algebraic and let 
$K$ be a differential field with field of constants $\mathbb{C}$. 
For any point $\bar{a} \in  (\mathbf{G}^\partial \backslash \check{D})(K)$ 
in the associated Schwarzian variety, $X_{S,\bar{a}}$ is the 
differential subvariety of $S$ defined by $\chi_S(x) = \bar{a}$.  	
\end{Def}

\begin{Rk}
At this level of generality, we cannot say much about $X_{S,\bar{a}}$.   
If $f$ is not surjective, then it may happen that $X_{S,\bar{a}} = \varnothing$.
As such, we usually insist that $\bar{a}$ belongs to the differentially 
constructible set obtained as the image of $S$ under $\chi_S$.   	
\end{Rk}

\begin{prop}
\label{not-aleph-nought}
If $S = S_{\Gamma,G,M}$ is a Shimura variety and 
$\bar{a}$ belongs to the image of $\chi_S$, 
then $X_{S,\bar{a}}$ does not have $\aleph_0$-
categorical	induced structure.
\end{prop}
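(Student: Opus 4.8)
\emph{Proof plan.} The plan is to produce infinitely many $2$-types realized by pairs of points of $X_{S,\bar{a}}$; by the Ryll--Nardzewski theorem this is incompatible with $\aleph_0$-categoricity of the induced structure. The binary relations witnessing these types will be the traces of the Hecke correspondences on $S$, and the argument is the natural generalization to an arbitrary Shimura variety of the argument of~\cite{FrSc}, where the Hecke correspondences on $Y(1)$ enter as the loci of the modular polynomials $\Phi_N(X,Y)$.

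First I would record that the Hecke correspondences restrict to $X_{S,\bar{a}}$. Write $S = \Gamma \backslash D$ with $\Gamma$ arithmetic in $G = \bG(\RR)^+$, and let $g \in \bG(\QQ)^+ \subseteq \operatorname{Comm}_G(\Gamma)$, with associated Hecke correspondence $T_g \subseteq S \times S$. If $a \in X_{S,\bar{a}}(\cM)$, choose a lift $\widetilde{a} \in D(\cM)$ with $q(\widetilde{a}) = f(a)$; then $q(g\widetilde{a})$ lies in the $T_g$-image of $a$, and for any $b$ in that image, $\chi_S(b) = \sS_{\bG,\check{D}}(g\widetilde{a})$. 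Since $g \in \bG(\QQ) \subseteq \bG(\CC)$ and $\sS_{\bG,\check{D}}$ is, by its defining property, constant exactly on $\bG(\CC)$-orbits, $\sS_{\bG,\check{D}}(g\widetilde{a}) = \sS_{\bG,\check{D}}(\widetilde{a}) = \chi_S(a) = \bar{a}$, so $b \in X_{S,\bar{a}}(\cM)$. Because $\bar{a}$ lies in the image of $\chi_S$, some point $a_0$ satisfies $\chi_S(a_0) = \bar{a}$, and hence $X_{S,\bar{a}}$ contains the entire $\bG(\QQ)^+$-Hecke orbit of $a_0$. Unwinding the definition of $\chi_S$ also shows that $X_{S,\bar{a}}$ is Zariski dense in $S$ --- already the analytic solution set of $\chi_S(y) = \bar{a}$ surjects onto $S$, since $\bG(\RR)^+$ acts transitively on $D$ --- so in a saturated $\mathcal{U} \models \operatorname{DCF}_0$ extending $\cM$ there is a point $a \in X_{S,\bar{a}}(\mathcal{U})$ that is Zariski generic in $S$ over $\overline{\QQ}$.

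Next I would pick $g_1, g_2, \ldots \in \bG(\QQ)^+$ lying in pairwise distinct double cosets $\Gamma g_N \Gamma$ with $\deg(T_{g_N}) \to \infty$; the $T_{g_N}$ are then pairwise distinct irreducible subvarieties of $S \times S$, of dimension $\dim S$, no one of which contains another. By the previous step each $T_{g_N}$-translate of $a$ lies in $X_{S,\bar{a}}(\mathcal{U})$, and as $a$ is Zariski generic in $S$ we may choose such a translate $b_N$ for which $(a,b_N)$ is Zariski generic in $T_{g_N}$; genericity then forces $(a,b_N) \notin T_{g_M}$ whenever $M \neq N$. The traces on $X_{S,\bar{a}} \times X_{S,\bar{a}}$ of the $T_{g_N}$ are relations of the induced structure --- they are defined over $\overline{\QQ}$, and over $\QQ$ after replacing each $T_{g_N}$ by the union of its finitely many Galois conjugates (a finite union of Hecke correspondences all of degree $\deg(T_{g_N})$, which therefore still separates distinct $N$) --- and these relations distinguish $\tp(a,b_N)$ from $\tp(a,b_M)$ for $N \neq M$. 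Thus $X_{S,\bar{a}}$ realizes infinitely many $2$-types, and its induced structure is not $\aleph_0$-categorical.

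The only delicate point is the bookkeeping of the last step: one must be sure that the binary relations used genuinely belong to the induced structure --- whence the attention to fields of definition and Galois closures, a point that is automatic for $Y(1)$ because the $\Phi_N$ have integer coefficients --- and that a suitably generic $a$ can be found in $X_{S,\bar{a}}(\mathcal{U})$, which is immediate from saturation once $X_{S,\bar{a}}$ is known to be Zariski dense in $S$. No functional transcendence input is needed for this proposition; by contrast, the statements proved later in this section about minimality, strong minimality, and triviality of the forking geometry of $X_{S,\bar{a}}$ do rely on the Ax--Schanuel theorems for the uniformizing maps of Shimura varieties.
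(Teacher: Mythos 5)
Your proposal takes essentially the same approach as the paper: both use Hecke correspondences $T_\gamma$ indexed by $\gamma \in \bG(\QQ)^+ = \Gamma^{\text{comm}}$ to produce infinitely many distinct $0$-definable binary relations on $X_{S,\bar{a}}$, which rules out $\aleph_0$-categoricity by Ryll--Nardzewski. Your write-up is a more careful elaboration of the paper's terse argument --- in particular, you justify explicitly that the $T_\gamma$ restrict to $X_{S,\bar{a}}$ via the $\bG(\CC)$-orbit invariance of $\chi_S$, and you address the $0$-definability and pairwise distinctness of the restricted correspondences, points the paper leaves implicit --- but these are supporting details, not a different route.
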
 
\begin{proof}
For any $\gamma \in G$ in the commensurator of 
$\Gamma$, $\Gamma^\text{comm}$,
which is all of $\bG(\mathbb{Q})^+$, 
the analytic variety 
$T_\gamma := \{ (\pi(\tau),\pi(\gamma \tau) ) ~:~
\tau \in D \}$ is an algebraic subvariety of 
$S \times S$ which restricts to a finite-to-finite
correspondence on $X_{S,\bar{a}}$ and the 
set of distinct such correspond	to 
the infinite coset space $\Gamma^\text{comm}/\Gamma$.
Thus, there are infinitely
many distinct $0$-definable
subsets of $X_{S,\bar{a}}^2$ 
so that its induced structure
is not $\aleph_0$-categorical.
\end{proof}

Some interesting subtleties emerge in the study of the differential
varieties $X_{S,\bar{a}}$ for general Shimura varieties not seen for 
case of the $j$-line.  It may happen that a point $a \in S(\cM)$ 
is Hodge generic, in the sense that it does not lie on any proper 
special subvariety, but the differential variety $X_{S,\bar{a}}$ is  
equal to $X_{S,\bar{b}}$ for some $b$ which is \emph{not} Hodge generic.  
Often when this happens, the Lascar and Morley ranks of $X_{S,\bar{a}}$ will
disagree. Such equations appear implicitly in~\cite{HrSc} with the 
differential variety $F_2$ of~\cite[Corollary 2.7]{HrSc}.   

With the next proposition we show that the \emph{type}
 of a generic point in such a $X_{S,\bar{a}}$ for $S$
irreducible is minimal.  We address the question of 
strong minimality afterwards.

\begin{prop}
\label{minimaltype}
Let $S$ be a connected, irreducible Shimura variety.  Express
$S$ as $S = S_{\Gamma,G,M}$.   Let $K \subseteq L$ be an
extension of differential fields each  
with field of constants $\mathbb{C}$ and $a \in S(L)
\smallsetminus S(K^\text{alg})$ an $L$-valued 
point of $S$ which is not algebraic over
$K$ having $\bar{a} := \chi_S(a) \in 
(\mathbf{G}^\partial \backslash \check{D})(K)$.  
Then $\tp(a/K)$ is minimal.
\end{prop}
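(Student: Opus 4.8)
The plan is to show that any proper, nonempty relatively Kolchin-closed subset of $X_{S,\bar a}$, defined over some extension $M \supseteq K$ and witnessed as containing a point $b$ with $\tp(b/M)$ a nonforking extension of $\tp(a/K)$ would have $b$ algebraic over $M$; equivalently, to show that every complete type extending $\tp(a/K)$ which is not the generic type has $U$-rank $0$. Since $X_{S,\bar a}$ is visibly of finite $U$-rank (it sits inside a prolongation space of $S$ and the Schwarzian equation $\chi_S(x) = \bar a$ pins down the ``constant-group'' part), minimality is the assertion that this rank is exactly $1$. So first I would reduce, by passing to a nonforking extension and using that $a \notin S(K^{\mathrm{alg}})$, to analysing a single generic point $a$ over $K$ and showing: if $b \in X_{S,\bar a}(L)$ with $b \in \operatorname{acl}(K a) \setminus K^{\mathrm{alg}}$-type considerations force $b$ and $a$ to be interalgebraic. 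The natural source of such interalgebraicity is the Hecke correspondence machinery already exploited in Proposition~\ref{not-aleph-nought}: two solutions of $\chi_S(x)=\bar a$ lift, via a common branch of $q^{-1}$, to points $\widetilde a,\widetilde b \in D$ lying in the same $\bG(\CC)$-orbit, and if $\widetilde b$ is algebraic over $\CC(\widetilde a)$ the Ax-Schanuel theorem for $S$ (Fact~\ref{def-period} and the references below Definition~\ref{def:AS}) constrains the pair to lie on a weakly special subvariety.

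The heart of the argument is the dichotomy supplied by Ax-Schanuel. Suppose $Z \subsetneq X_{S,\bar a}$ is a proper Kolchin-closed subset over a differential field $M \supseteq K$ containing a realization $b$ of a forking-free-looking extension, and let $\widetilde b \in D(\cM)$ lift $f(b)$. On the one hand $b$ satisfies the same Schwarzian equation as $a$, so $\widetilde b$ lies in the $\bG(\CC)$-orbit of a fixed lift, which bounds $\trdeg_\CC \CC(\widetilde b)$ from above by $\dim D$ \emph{unless} this orbit is all of $D$, i.e.\ unless $b$ generic forces $\trdeg_\CC\CC(b,\widetilde b) = \dim D + 1$; on the other hand, if $\trdeg_\CC \CC(b,\widetilde b) \le \dim D$ then Ax-Schanuel for $S$ puts $f(b)$ on a proper weakly special subvariety $\widetilde S \subsetneq S_{\Gamma,G,M}$. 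Here is where irreducibility of $S$ enters decisively: for an irreducible Shimura variety the \emph{only} proper weakly special subvarieties are the semiconstant ones and the points, so $f(b)$ lying on a proper weakly special $\widetilde S$ forces either $b$ semiconstant — but then, since $\chi_S(b) = \bar a = \chi_S(a)$ and $a$ is not algebraic over $K$, one argues that the whole $G^\partial$-orbit structure collapses and $a$ too would be trapped, a contradiction — or $\widetilde S$ is a point, whence $b \in S(K^{\mathrm{alg}})$ and $b$ is algebraic. This forces $Z$ to consist of algebraic points, so $\operatorname{RU}(X_{S,\bar a}) \le 1$, and since $a \notin S(K^{\mathrm{alg}})$ we have $\operatorname{RU} = 1$, i.e.\ $\tp(a/K)$ is minimal.

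The step I expect to be the main obstacle is the semiconstant case: ruling out that a generic realization of a forking extension of $\tp(a/K)$ is semiconstant while $a$ itself is not. The subtlety flagged by the authors just before the proposition — that $X_{S,\bar a}$ can coincide with $X_{S,\bar b}$ for a non-Hodge-generic $\bar b$, and that Lascar and Morley rank can then diverge — lives precisely here, so the argument must use more than a crude rank count; it should use that the differential constructible sets $\Xi_\bH$ of Lemma~\ref{H-orbit-eq} and their fully-transcendental versions $\Xi^\ft_\bH$ are Kolchin-closed-complement, so that ``$b$ is semiconstant'' is itself a proper Kolchin-closed condition on $X_{S,\bar a}$ which $a$ avoids, hence no \emph{generic} extension of $\tp(a/K)$ can satisfy it — and then the only remaining way to land in a proper weakly special subvariety is to land in a point. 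Once that is cleanly in place the rest is the standard Ax-Schanuel transcendence bookkeeping already carried out in the proof of Theorem~\ref{thm:ZP-H}, now applied with $X$ a single generic point.
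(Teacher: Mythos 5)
Your proposal captures the correct high-level shape---bound transcendence degree from two sides, invoke an Ax-Schanuel dichotomy to land in a weakly special subvariety, use irreducibility of $S$---but it has several real gaps, and it omits the two load-bearing ingredients of the paper's argument.

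First, the basic Ax-Schanuel condition (Definition~\ref{def:AS}) is insufficient here; the paper's proof crucially uses the Ax-Schanuel \emph{with derivatives} theorem of~\cite[Theorem~12.3]{MPT}. Your bookkeeping compares $\trdeg_\CC\CC(b,\widetilde{b})$ against $\dim D$, with the upper bound supposedly supplied by $\widetilde{b}$ lying in the $\bG(\CC)$-orbit of a fixed lift of $a$. But the fixed lift $\widetilde{a}_0$ is \emph{not} a constant point---$\bar{a}$ lives in $K$, not $\CC$---so the orbit $\bG(\CC)\cdot\widetilde{a}_0$ is not a $\CC$-variety and gives no upper bound on $\trdeg_\CC\CC(\widetilde{b})$ in terms of $\dim D$. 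What the Schwarzian equation actually controls is the transcendence degree of the \emph{differential} field generated by $b$, which~\cite[Proposition~4.2]{Sc-ade} bounds by $\dim\bG$; and the matching lower bound involving jets $\nabla_r(a_i)$ comes only from the derivative form of Ax-Schanuel. The basic Ax-Schanuel says nothing about the differential variety $X_{S,\bar a}$, only about the algebraic variety $S$, and so cannot by itself detect that a forking extension degenerates.

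Second, your claim that for an irreducible Shimura variety the only proper weakly special subvarieties are the semiconstant ones and the points is false: the image of $\cA_1\times\cA_1\to\cA_2$, for instance, is a proper, positive-dimensional, strongly special subvariety of the irreducible Shimura variety $\cA_2$. What irreducibility actually buys in the paper's proof is more delicate. The proof runs the Ax-Schanuel-with-derivatives estimate against a \emph{Morley sequence} $(a_0,\dots,a_{N-1})$ in the forking extension $\tp(a/M)$, concludes the tuple lies in a proper strongly special $S'\subseteq S^{\times N}$, and then looks at the specific fiber $V:=\pi\bigl((\{(a_0,\dots,a_{N-2})\}\times S)\cap S'\bigr)$ containing $a_{N-1}$. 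Irreducibility shows that \emph{this particular $V$} is finite: if it were infinite it would be a fiber of a finite map $\rho:S_1\times S_2\to S$ with image a proper special subvariety, and that image would trap $a_{N-1}$, hence (by indiscernibility) $a$, contradicting Hodge genericity. This is a statement about one constructed fiber, not a classification of all weakly specials of $S$.

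Third, even after landing $b$ in a finite weakly special subvariety, your argument has no mechanism to conclude algebraicity of $a$ over the base: the paper needs the Morley-sequence setup for this too, via a pigeonhole argument (finitely many values, infinitely many indiscernibles, hence repetitions, hence all equal, hence algebraic). In short, the Morley-sequence amplification and the derivative Ax-Schanuel theorem, both absent from your proposal, are what make the transcendence bookkeeping close and the final algebraicity step go through.
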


Before we commence with the proof, 
let us dispense with some niceties.  First, for us a Shimura variety
is positive dimensional.  Secondly, by ``irreducible'' we mean that
the Hermitian domain $D$ is irreducible.  From the point of view of the 
Shimura variety itself, this means that we cannot find Shimura varieties
$S_1$ and $S_2$ and a finite, dominant map $S_1 \times S_2 \to S$
of Shimura varieties, that is, as quotient spaces. 

\begin{proof}
Using the Seidenberg embedding theorem and shrinking $L$ to be a finitely generated differential field 
if need be, 
we may regard $L$ as differential subfield of $\cM$.  Let $\widetilde{a} 
\in \check{D}(\cM)$ so that that $a = \pi(\widetilde{a})$. 

Since $a$ is not algebraic over $K$, $U(a/K) \geq 1$.  
We check now that $U(a/K) \leq 1$.  That is, for 
any differential field $M$ containing $K$  
either $a \in S(M^\text{alg})$ or 
$a$ is independent from $M$ over $K$.  As above, 
we may take $M$ to be a finitely generated over $K$
differential subfield of $\cM$.   

We suppose that $a \notin S(M^\text{alg})$. By~\cite[Proposition 4.2]{Sc-ade}, 
the transcendence degree over $K$
of the differential field generated over $K$ by $a$ is at most
$\dim \mathbf{G}$.  With the following calculation we will show 
that, in fact, the transcendence degree over $L$ 
of the differential field generated over $M$ by $a$ is exactly 
$\dim(\mathbf{G})$.   

Let $(a_i)_{i=0}^\infty$ be a Morley sequence in 
$\operatorname{tp}(a/M)$ with $a_0 = a$.   Let $g_i \in G$
so that $a_i = \pi(g_i \widetilde{a})$. Let $r \geq \dim \mathbf{G}$ 
and set $b_i := \nabla_r(a_i)$.  Let $M' \subseteq M$  
be a finitely generated (over $M$) subfield over 
which $\bar{a}$ and the algebraic locus of $c_0$ over $M$ are 
defined.    Let $N \in \mathbb{N}$ be a natural number.   

We assume towards a contradiction that
$\vec{a} := (a_0, \ldots, a_{N-1}) \in S^N(\cM)$ is Hodge generic
and that $\trdeg_M M \langle a \rangle < \dim \mathbf{G}$.  
For the first step of the following 
computation we use~\cite[Theorem 12.3]{MPT}.
 
\begin{eqnarray*}
1 + N \dim \mathbf{G} & \leq & \trdeg_\mathbb{C} \mathbb{C} (g_0 \tilde{a}, 
\ldots, g_{N-1} \tilde{a}, c_0, \ldots, c_{N-1}) \\
	& = & \trdeg_\mathbb{C} \mathbb{C}(\widetilde{a}, (c_i)_{i=0}^{N-1}) \\
	& \leq & \trdeg_\mathbb{C} M'(\widetilde{a}, (c_i)_{i=0}^{N-1}) 	\\
	& \leq & \trdeg_\mathbb{C} M'(\widetilde{a}) +  N \trdeg_{M'} M'(c_0) \\
	& \leq & \trdeg_\mathbb{C} M'(a) + N (\dim \mathbf{G} - 1)
\end{eqnarray*}
   
If we take $N  \geq \trdeg_\mathbb{C} M'(a)$, then this inequality fails.  Thus,
our hypothesis that $\trdeg_M M \langle a \rangle 
< \dim \mathbf{G}$ and that 
$(a_i)_{i=0}^{N-1}$ is 
Hodge generic must be wrong.  We know that $a$ depends 
on $M$ over $K$.  Thus, $\vec{a}$ is not Hodge generic.  
Let $S' \subseteq S^{\times N}$ be a proper
strongly special subvariety with $(a_0,\ldots,a_{N-1}) 
\in S'(\cM)$.  Since $a_0$ is Hodge generic in $S$ and 
$(a_i)_{i=0}^\infty$ is $M$-indiscernible, each 
$a_i$ is Hodge generic in $S$.  Thus, $S'$ projects
dominantly on each factor and the variety
$V := \pi((\{(a_0,\ldots,a_{N-2})\} \times S) \cap S')$ 
(where $\pi:S^{\times N} \to S$ is the projection to the 
last coordinate) is a proper $\cM$-weakly special subvariety of 
$S$ containing $a_{N-1}$.  

Using irreducibility of $S$ as a Shimura variety we 
see that $V$ must be finite.  Indeed, if $V$ were infinite, 
then it could be expressed as $\rho ( \{ b \} \times S_2)$ 
where $\rho:S_1 \times S_2 \to S$ is a finite map of Shimura
varieties with $S_1$ and $S_2$ infinite and $b$ an $\cM$-valued 
point.  Since $S$ is irreducible, the image if $\rho$ is not all of 
$S$.  Thus, $a_{N-1}$ belongs to the proper
special variety $\rho(S_1 \times S_2)$, which implies
by indiscernibility that $a$ does, too, contradicting 
its Hodge genericity.  Thus, $V$ is finite.

Because $(a_i)_{i=0}^\infty$ is indiscernible, 
$(a_0,\ldots,a_{N-2},a_j) \in S'(\cM)$ for 
all $j \geq N-1$. It follows 
by the the pigeonhole principle that there are 
$i > j \geq N$ with $a_i = a_j$.  By indiscernibility
again, $a_i = a_j$ for all $i$ and $j$.  Since 
$(a_i)_{i=0}^\infty$ is a Morley sequence, and, 
in particular, is independent, this can only 
happen if $a_i \in S(L^\text{alg})$.    
\end{proof}

The failure of strong minimality comes from 
differential equations associated with proper special 
subvarieties.  We isolate the relevant condition 
with the next definition.

\begin{Def}
Let $f:S \to S_{\Gamma,G,M}$ be bi-algebraic and $a \in S(\cM)$.  
We define $\delta$-$\mathbf{M}\mathbf{T}(a)$ to be the 
semi-simple $\mathbb{Q}$-algebraic group $\mathbf{H} \leq \mathbf{G}$
if $a \in \Xi_{S,\mathbf{H}}(\cM)$ but for all proper 
semi-simple $\mathbb{Q}$-algebraic subgroup $\mathbf{H}' < \mathbf{H}$,
$a \notin \Xi_{S,\mathbf{H}'}(\cM)$.  We say that 
$a$ is differentially Hodge generic if $\delta$-$\mathbf{M}\mathbf{T}(x) 
= \mathbf{G}$.   Note that $\delta$-$\mathbf{M}\mathbf{T}(x)$ is only 
well-defined up to $\mathbf{G}$ conjugacy.
\end{Def}

\begin{Rk}
Our use of the word ``differentially Hodge generic'' is inspired by 
Buium's work in~\cite{Bu} though we are not 
following precisely the same formalism here.   	
\end{Rk}

\begin{prop}
Let $S$ be a connected, irreducible Shimura variety.  Express
$S$ as $S = S_{\Gamma,G,M}$.   Let $K \subseteq L$ be an
extension of differential fields each with  
with field of constants $\mathbb{C}$ and $a \in S(L)
\smallsetminus S(K^\text{alg})$ an $L$-valued 
point of $S$ which is not algebraic over
$K$ having $\bar{a} := \chi_S(a) \in 
(\mathbf{G}^\partial \backslash \check{D})(K)$.  
If $a$ Hodge generic but is \emph{not} $\delta$-Hodge generic, then 
$RM(a/K) > 1$.
\end{prop}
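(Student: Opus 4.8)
The plan is to show that the minimal type $\tp(a/K)$ is not \emph{strongly} minimal by exhibiting a nonalgebraic extension $K'$ of $K$ over which the algebraic locus of $a$ acquires extra components, which forces $RM(a/K) \geq 2$ even though $U(a/K) = 1$ by Proposition~\ref{minimaltype}. Concretely, since $a$ is Hodge generic but not $\delta$-Hodge generic, by definition $a \in \Xi_{S,\mathbf{H}}(\cM)$ for some proper semi-simple $\mathbb{Q}$-algebraic subgroup $\mathbf{H} < \mathbf{G}$ (take $\mathbf{H} = \delta$-$\mathbf{MT}(a)$). Unwinding Lemma~\ref{H-orbit-eq}, this means that a lift $\widetilde{a} \in \check{D}(\cM)$ of $f(a)$ can be written as $\widetilde{a} = h^g \cdot b$ with $g \in \mathbf{G}(\CC)$, $b \in \check{D}(\CC)$, and $h \in \mathbf{H}(\cM)$. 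The idea is that $h$ and $g$ (hence the conjugate $\mathbf{H}^g$) are ``hidden parameters'' not visible over $K$: over $K$ the point $a$ satisfies only the equation $\chi_S(x) = \bar a$, but over a larger base $K'$ containing the data of $g$ and $b$, the point $a$ is pinned to the proper weakly special variety $f^{-1}(q(\mathbf{H}^g(\RR)^+ \cdot b))$, and this variety, together with its $\Gamma^{\mathrm{comm}}$-translates (using the correspondences $T_\gamma$ from the proof of Proposition~\ref{not-aleph-nought}), produces distinct conjugate loci of $a$.

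The key steps, in order, would be: (1) Via Seidenberg, work inside $\cM$, pick $\widetilde a$ with $q(\widetilde a) = f(a)$, and write $\widetilde a = h^g \cdot b$ as above with $\mathbf{H} = \delta$-$\mathbf{MT}(a)$ proper in $\mathbf{G}$. (2) Let $S' := q(\mathbf{H}^g(\RR)^+ \cdot b) \subseteq S_{\Gamma,G,M}$, a proper weakly special subvariety (a priori over the constants for the subgroup but possibly depending on the transcendental parameter $g$ — one should pass to the \emph{special} closure, i.e., replace $\mathbf{H}^g$ by a $\mathbb{Q}$-algebraic group, using that $\Xi_\bH$ is defined over $\CC$ so the relevant $\mathbb{Q}$-group is one of finitely many conjugates; this is where the argument of Lemma~\ref{special-ODE} is reused). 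Set $Y := f^{-1}(S') \subsetneq S$; then $a \in Y(\cM)$. (3) Let $K' \supseteq K$ be a finitely generated differential field over which $Y$ (equivalently $b$ and the conjugating parameter) is defined; by Proposition~\ref{minimaltype}, $\tp(a/K')$ is still minimal and, since $\dim Y < \dim S$ while $a \notin S(K^{\mathrm{alg}})$, one checks $a \notin S(K'^{\mathrm{alg}})$ for a suitable such $K'$ (here one uses that $a$ is nonalgebraic and the generic fibre of $\chi_S$ over $\bar a$ still meets $Y$ in a proper subvariety — Hodge genericity of $a$ guarantees $Y_\cM \cap X_{S,\bar a}$ is a proper differential subvariety but nonempty of positive dimension). (4) Now count Morley degree: the $\Gamma^{\mathrm{comm}}/\Gamma$ correspondences $T_\gamma$ act on $X_{S,\bar a}$, and applying them to the single component $Y_\cM \cap X_{S,\bar a}$ over $K'$ yields infinitely many, hence at least two, distinct conjugate components of the locus of $a$ over $K'$; since $\tp(a/K)$ is stationary-ish only up to finitely many conjugates, this forces the Morley degree of $\tp(a/K)$ — and with $U(a/K)=1$ this means $RM(a/K) \geq 2 > 1$. (Alternatively and more robustly: $RM$ is definable and $U = RM$ would force strong minimality, i.e., $\tp(a/K')$ being of $RM$ one and degree one for all $K'$; the jump in the number of components over $K'$ contradicts degree one, so $RM(a/K) > 1$.)

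I expect the main obstacle to be step (2)–(3): making precise the claim that the ``extra structure'' the point $a$ satisfies is genuinely invisible over $K$ but visible over a \emph{nonalgebraic} extension $K'$, i.e., that one can choose $K'$ with $a \notin S(K'^{\mathrm{alg}})$ while $Y$ is $K'$-rational. This requires care because a priori the parameter $g \in \mathbf{G}(\CC)$ defining $\mathbf{H}^g$ and the point $b \in \check D(\CC)$ live over the constants, so the relevant weakly special $Y$ might already be $\CC$-rational — in which case, since it is special and $a$ is Hodge generic, this would be a contradiction unless $Y_\cM \cap X_{S,\bar a}$ is handled correctly. The resolution is that $\delta$-$\mathbf{MT}(a) = \mathbf{H}$ being a proper $\mathbb{Q}$-group means $Y = f^{-1}(\text{a proper special subvariety})$ is indeed defined over $\CC$, so $K' = K$ suffices and the real content is purely on the differential side: the differential variety $\Xi_{S,\mathbf{H}} \cap X_{S,\bar a}$ has the same Lascar rank ($=1$) as $X_{S,\bar a}$ but strictly smaller Morley rank, because $X_{S,\bar a}$, not being contained in any single $\Xi_{S,\mathbf{H}'}$ for the generic fibre, has a component structure reflecting the whole family of special subvarieties — and then the comparison $RM \geq U + 1$ follows from the standard fact that for a minimal type, $RM = 1$ forces every nonalgebraic forking extension to also be minimal of $RM$ one, whereas passing into $\Xi_{S,\mathbf{H}}$ (a proper differential subvariety meeting $a$) witnesses a drop that is incompatible with $X_{S,\bar a}$ having $RM = 1$. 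I would model the bookkeeping on the treatment of $F_2$ in~\cite[Corollary 2.7]{HrSc}, which the authors explicitly flag as the prototype.
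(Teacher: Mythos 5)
Your proposal circles around the right ingredients---the proper $\QQ$-subgroup $\bH$ witnessing the failure of $\delta$-Hodge genericity, and the Hecke correspondences $T_\gamma$ from Proposition~\ref{not-aleph-nought}---but the assembly does not close. The paper's argument stays over $K$ itself: choose $g \in \bG(\CC)$ so that $b := \pi(g\widetilde{a})$ lands on a proper special subvariety $S_{\Gamma\cap H, H, M\cap H}$, and observe that for each $\gamma$ in the commensurator $\Gamma^{\text{comm}} = \bG(\QQ)^+$, the point $b_\gamma := \pi(\gamma g \widetilde{a})$ lies in $X_{S,\bar{a}}(\cM)$, is nonalgebraic over $K$, and---this is the crucial structure you never isolate---satisfies $\tp(b_\gamma/K) \neq \tp(a/K)$ \emph{because $b_\gamma$ lies on a proper special subvariety while $a$ is Hodge generic}. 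Then the Euclidean density of $\Gamma^{\text{comm}}$ in $G$ shows no proper Kolchin closed $Z \subsetneq X_{S,\bar{a}}$ contains all the $b_\gamma$: a differential regular function vanishing at every $b_\gamma$ vanishes on a dense set of $G$-translates of $\widetilde{a}$, hence identically. So every formula in $\tp(a/K)$ also admits a second nonalgebraic $K$-type, giving Cantor--Bendixson, hence Morley, rank at least two. Your phrase ``distinct conjugate components of the locus of $a$ over $K'$'' misses the point: $K'$-conjugates of $a$ all realize the \emph{same} $K'$-type and so contribute nothing.

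Your concluding resolutions also do not hold up. Passing to an extension $K'$ is not required (the failure is visible over $K$), and as you yourself flag, the natural $Y$ may already be $\CC$-rational. The claim that $\Xi_{S,\bH}\cap X_{S,\bar a}$ ``has the same Lascar rank but strictly smaller Morley rank'' and that this ``witnesses a drop incompatible with $RM(X_{S,\bar a})=1$'' is not a valid inference: a Morley rank $1$ set of degree $d>1$ has infinite proper definable subsets (of smaller degree) with no contradiction at all, so exhibiting one infinite proper differential subvariety through $a$ proves nothing by itself. And the ``standard fact'' that a minimal type of $RM=1$ forces every nonalgebraic forking extension to be minimal is misstated---a minimal type has \emph{no} nonalgebraic forking extensions by definition of $U$-rank one. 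What the proof actually needs, and what the paper supplies, is a dense supply of \emph{other} nonalgebraic $K$-types satisfying every formula of $\tp(a/K)$.
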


\begin{proof}
Without loss of generality, we may 
assume that $K \subseteq L$ are 
finitely generated as
$\mathbb{C}$-differential algebras,  
and that $L \subseteq \cM$.
We will check that for each proper Kolchin 
closed subset $Z \subsetneq X_{S,\bar{a}}$, 
there is some $b \in (X_{S,\bar{a}} \smallsetminus
Z)(\cM)$ with $b$ not algebraic over $K$ and 
$\tp(b/K) \neq \tp(a/K)$.  Thus, the 
Cantor-Bendixson rank of $\tp(a/K)$ will be 
at least two, and, \emph{a fortiori}, 
$RM(a/K) \geq 2$.

Fix $\widetilde{a}$ with $a = \pi(\widetilde{a})$. 
Since $a \in \Xi_\mathbf{H}(\cM)$, there is 
some $g \in \mathbf{G}(\mathbb{C})$ with 
$g \widetilde{a} \in \mathbf{H} \cdot \check{D}^\partial$.  Multiplying by another element of
$G$ if need be, we have that 
$b := \pi (g \widetilde{a}) \in S'(\cM) := 
S_{\Gamma \cap H, H, M \cap H}(\cM)$ is an element of 
a proper special subvariety and 
$b \notin S'(K^\text{alg})$.   Indeed, for any 
$\gamma$ in the commensurator of $\Gamma$ 
(which under our hypotheses is just $\mathbf{G}(\mathbb{Q})^+$), 
$b_\gamma := \pi (\gamma g \tilde{a})$
belongs to the special variety $S_{\Gamma \cap H^\gamma,H^\gamma,M \cap H^\gamma}$ and is 
not algebraic over $K$.  Since $b_\gamma$ is not 
Hodge generic in $S$, $\tp(b_\gamma/K) \neq \tp(a/K)$.

It remains to check that there is no proper differential
subvariety $Z \subsetneq X_{S,\bar{a}}$ with 
$b_\gamma \in Z(\cM)$ for all such $\gamma$.  Consider
any differential regular function $h$ on $S$.  Then 
$h(\pi(y \cdot \widetilde{a})) = 0$ defines 
an analytic subvariety of $G$ where we regard $y$ as 
a variable ranging over $G$.   The commensurator group 
of $\Gamma$ is dense in the Euclidean topology in $G$.
Hence, this equation vanishes for all $g \in G$, 
implying that $h$ vanishes on all of $X_{S,\bar{a}}$.     
\end{proof}

On the other hand, for $\delta$-Hodge generic points
of irreducible Shimura varieties,
the types are strongly minimal.

\begin{prop}
Let $S$ be a connected, irreducible Shimura variety.  Express
$S$ as $S = S_{\Gamma,G,M}$.   Let $K \subseteq L$ be an
extension of differential fields each with  
with field of constants $\mathbb{C}$ and $a \in S(L)
\smallsetminus S(K^\text{alg})$ an $L$-valued 
point of $S$ which is $\delta$-Hodge generic 
and not algebraic over
$K$ having $\bar{a} := \chi_S(a) \in 
(\mathbf{G}^\partial \backslash \check{D})(K)$.  
Then $\tp(a/K)$ is strongly minimal.	
\end{prop}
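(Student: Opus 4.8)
The plan is to combine the minimality already established in Proposition~\ref{minimaltype} with an upgrade of the uniform Ax-Schanuel-with-derivatives argument that controls the transcendence degree of Morley sequences, this time exploiting $\delta$-Hodge genericity to rule out the drop in Morley rank that occurred in the previous proposition. First I would recall that, by Proposition~\ref{minimaltype}, $\tp(a/K)$ is already minimal, i.e. $U(a/K) = 1$; so it remains only to show that $RM(a/K) = 1$, equivalently that the type $\tp(a/K)$ is isolated among complete types over $K$ extending the formula defining $X_{S,\bar{a}}$, or (more usefully here) that there is no proper Kolchin-closed $Z \subsetneq X_{S,\bar{a}}$ defined over $K$ missing $a$ such that every point of $X_{S,\bar a}$ outside $Z$ has the same type as $a$ fails --- i.e., that the complement of the single $K$-locus of $a$ in $X_{S,\bar a}$ is contained in a proper Kolchin-closed set. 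Following the earlier conventions I would pass to finitely generated differential subfields of $\cM$ via Seidenberg and write $a = \pi(\widetilde{a})$ with $\widetilde{a} \in \check{D}(\cM)$.

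The key step is the transcendence computation. Suppose for contradiction that $X_{S,\bar a} \smallsetminus \operatorname{loc}(a/K)$ is Zariski dense (equivalently Kolchin dense, by minimality), so we may pick $b \in X_{S,\bar a}(\cM)$ with $\tp(b/K) \ne \tp(a/K)$ and $b$ not algebraic over $K$. Since $b \in X_{S,\bar a}$ means $\chi_S(b) = \bar a = \chi_S(a)$, there is $g \in \bG(\CC)$ with $\widetilde{b} = g \widetilde{a}$ for a lift $\widetilde{b}$ of $b$; the point of $\delta$-Hodge genericity is exactly that $a$ (hence $b$) lies on no $\Xi_{S,\mathbf H'}$ for a proper semisimple $\mathbf H' < \bG$, which by Lemma~\ref{H-orbit-eq} forbids $g\widetilde a$ from lying in any $\mathbf H'^{\bG^\partial}\cdot\check D^\partial$. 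Now form a long Morley sequence $(a_i)$ in $\tp(a/K)$ and a parallel Morley sequence realizing the hypothetical second type, and run the Ax-Schanuel-with-derivatives inequality of~\cite[Theorem 12.3]{MPT} exactly as in Proposition~\ref{minimaltype}: the transcendence degree of the tuple of lifts and their prolongations is bounded below by $1 + N\dim\bG$. The $\delta$-Hodge genericity, together with irreducibility of $S$ as a Shimura variety, forces the resulting weakly special subvariety $V \subseteq S$ produced by the argument to be all of $S$ rather than a proper special subvariety --- this is the crucial divergence from the non-$\delta$-Hodge-generic case, where $V$ could be a proper special subvariety coming from $\delta$-$\mathbf{MT}(a) \lneq \bG$. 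Hence the only way the inequality can be consistent is if the parallel sequence eventually repeats, forcing $b$ algebraic over $K$, a contradiction; so $\tp(b/K) = \tp(a/K)$ after all, and the complement of $\operatorname{loc}(a/K)$ in $X_{S,\bar a}$ is a proper Kolchin-closed set, giving $RM(a/K) = 1 = U(a/K)$.

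With $RM(a/K) = U(a/K) = 1$ and minimality already in hand, $\tp(a/K)$ is strongly minimal: every formula in the type either cuts out a cofinite subset or a finite subset of $X_{S,\bar a}$, uniformly, so the set defined by $\tp(a/K)$ has finite Morley rank $1$ and Morley degree $1$ and is therefore strongly minimal in the usual sense (every definable subset, over any parameters, is finite or cofinite). The main obstacle I expect is the second step: making precise how $\delta$-Hodge genericity of $a$ propagates --- under indiscernibility --- to a statement strong enough to force $V = S$ rather than merely a proper special subvariety, and checking that the parallel Morley sequence attached to the hypothetical competitor type $\tp(b/K)$ is honestly a Morley sequence of a non-algebraic type so that the pigeonhole/repetition argument delivers the contradiction; one has to be careful that $b$ lying on $X_{S,\bar a}$ does not secretly mean $b$ fails to be $\delta$-Hodge generic, which is precisely where the Lemma~\ref{H-orbit-eq} characterization of the $\Xi_{S,\mathbf H}$ in terms of $\bG(\CC)$-orbits is used to transfer the $\delta$-$\mathbf{MT}$ computation from $a$ to $b = \pi(g\widetilde a)$.
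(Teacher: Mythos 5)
Your reduction is right as far as it goes: after Proposition~\ref{minimaltype} gives $U(a/K)=1$, what remains is to isolate $\tp(a/K)$ as the \emph{unique} non-algebraic type in some $K$-definable subset of $X_{S,\bar a}$, and the paper and you both take the set to be $X_{S,\bar a}$ with the $\delta$-Hodge non-generic locus (a finite union of $\Xi_{\mathbf H}$'s) removed. You are also right to note that a competitor $b$ in this set is of the form $\pi(g\widetilde a)$ with $g\in\bG(\CC)$ and is again $\delta$-Hodge generic.

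The gap is in the step where you try to force $\tp(b/K)=\tp(a/K)$ by running the Ax--Schanuel-with-derivatives/Morley-sequence computation on a ``parallel'' sequence for $\tp(b/K)$. That mechanism only detects a \emph{deficit} in transcendence degree: in Proposition~\ref{minimaltype} the contradiction arose from assuming $\trdeg_M M\langle a\rangle<\dim\bG$, and it produced a proper weakly special subvariety; the pigeonhole step then collapsed the Morley sequence. But here both $a$ and $b$ are $\delta$-Hodge generic and non-algebraic over $K$, so by the minimality already proved each has $\trdeg_K K\langle\cdot\rangle=\dim\bG$: the Ax--Schanuel lower bound is already met with equality and there is nothing to pigeonhole. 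No special subvariety appears, and $\delta$-Hodge genericity ``forcing $V=S$'' is not a contradiction but simply the statement that Ax--Schanuel gives no information. So this route cannot separate or identify the two types. What is actually needed --- and what the paper supplies --- is a different ingredient: a single generic component. The paper takes any differential polynomial $H$ over $K$ vanishing at $a'=\pi(g\widetilde a)$, considers the analytic function $y\mapsto H(\pi(y\cdot\widetilde a))$ on $G$, notes that because the locus of $a'$ over $K$ has full dimension $\dim\bG$ this vanishing set is a full-dimensional analytic subset of the connected (real-)analytically irreducible group $G$, hence is all of $G$, and in particular contains $y=1$; thus $H(a)=0$ as well. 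This shows the $K$-differential ideals of $a$ and $a'$ coincide, i.e.\ $\tp(a/K)=\tp(a'/K)$, which is exactly the uniqueness of the generic type. Your own closing caveat (``I expect the main obstacle is the second step'') correctly flags where the proposal is incomplete; the missing idea is this analytic-irreducibility argument, not a refinement of the Morley-sequence count.
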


\begin{proof}
We have already seen with Proposition~\ref{minimaltype}
that $\tp(a/K)$ is minimal.  It suffices to 
check that this type is isolated from all other 
nonalgebraic types.   Let $\cH$ be a finite 
set of proper, semisimple $\mathbb{Q}$-algebraic 
subgroups of $\mathbf{G}$ for which every 
such semisimple, $\mathbb{Q}$-algebraic subgroup
of $\mathbf{G}$ is $G$-conjugate to some element of 
$\cH$.  Let $Z := \bigcup_{\mathbf{H} \in \cH}
\Xi_\mathbf{H} \cap X_{S,\bar{a}}$.  Then 
$a \in X_{S,\bar{a}} \smallsetminus Z$ and we claim 
that if $a' \in (X_{S,\bar{a}} \smallsetminus Z)(\cM)$
is not algebraic over $K$, then $\tp(a/K) = \tp(a'/K)$.
Indeed, such an $a'$ is nonconstant (because every 
element of $X_{S,\bar{a}}$ is nonconstant) and 
is $\delta$-Hodge generic.  Thus, by 
Proposition~\ref{minimaltype}, 
$\trdeg_K K \langle a' \rangle = \dim \mathbf{G}$.  
It remains to check that $X_{S,\bar{a}}$ has 
only one generic component.  

Take $\widetilde{a}$ so that $a = \pi(\widetilde{a})$
and $g$ so that $a' = \pi(g \widetilde{a})$.   
Consider some algebraic differential equation
$H(x) = 0$ satisfied 
by $a'$.  Consider the analytic
equation $h(y \cdot \widetilde{a}) = 0$ with 
the variable $y$ ranging over $G$.  Because the 
locus of $a'$ has full dimension, this equation would
cut out an analytic subset of $G$ of full dimension. 
Since $G$ is irreducible, this equation would have
to vanish everywhere.  That is, this equation cannot 
distinguish $a'$ from $a$.  	
\end{proof}

With the next lemma we observe that 
all dependences between the types we have been 
considering may be explained by special varieties.
The proof reprises that of Proposition~\ref{minimaltype}.

\begin{lem}
\label{dependence-lem}
Let $S_1$ and $S_2$ be connected, 
irreducible, pure Shimura varieties.  
We express these as $S_i = S_{\Gamma_i,G_i,M_i}$ 
for $i = 1$ or $2$.  Let $K \subseteq L$ be an
extension of differential fields each with  
with field of constants $\mathbb{C}$ and 
$a_i \in S_i(L)$ for $i = 1$ or $2$ be Hodge 
generic points each of which is not algebraic 
over $K$.  Then $a_1$ and $a_2$ are 
dependent over $K$ if and only if there
is a special subvariety $T \subseteq S_1 \times S_2$
with $(a_1,a_2) \in T(L)$ and each projection 
$T \to S_i$ is finite and dominant. 	
\end{lem}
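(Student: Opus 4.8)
We treat the two implications separately.

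\emph{The implication $(\Leftarrow)$.} Suppose $T \subseteq S_1 \times S_2$ is special, $(a_1,a_2) \in T(L)$, and both projections $p_i \colon T \to S_i$ are finite and dominant. Since $T$ is special it is defined over $\overline{\QQ} \subseteq \CC \subseteq K$, so finiteness of $p_1$ forces each coordinate of $a_2$ to be algebraic over $K(a_1)$; hence $a_2 \in \operatorname{acl}(K\langle a_1\rangle)$ in $\operatorname{DCF}_0$, and symmetrically $a_1 \in \operatorname{acl}(K\langle a_2\rangle)$. Were $a_1$ and $a_2$ independent over $K$ we would have $U(a_2/K\langle a_1\rangle) = U(a_2/K)$, which is positive because $a_2$ is not algebraic over $K$; but $a_2 \in \operatorname{acl}(K\langle a_1\rangle)$ gives $U(a_2/K\langle a_1\rangle) = 0$, a contradiction. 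So $a_1$ and $a_2$ are dependent over $K$.

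\emph{Setup for $(\Rightarrow)$ and a preliminary.} Suppose $a_1$ and $a_2$ are dependent over $K$. By the Seidenberg embedding theorem and finite character of forking we may assume $K$ and $L$ are finitely generated over $\CC$ and $L \subseteq \cM$; over such a $K$, dependence of $a_1,a_2$ is equivalent to $\trdeg_K K\langle a_1,a_2\rangle < \trdeg_K K\langle a_1\rangle + \trdeg_K K\langle a_2\rangle$. First I claim each $a_i$ is in fact $\delta$-Hodge generic. Writing $\bH_i := \delta\text{-}\mathbf{M}\mathbf{T}(a_i)$, Lemma~\ref{H-orbit-eq} exhibits a lift of $a_i$ inside an orbit of a conjugate of $\bH_i(\RR)^+$ whose image in $S_i$ is a weakly special subvariety through $a_i$; this image is positive-dimensional, since otherwise a lift of $a_i$ would be constant and $a_i \in S_i(\CC) \subseteq S_i(K^{\mathrm{alg}})$, contrary to hypothesis. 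But, by the argument used for irreducibility in the proof of Proposition~\ref{minimaltype}, every positive-dimensional weakly special subvariety of the irreducible Shimura variety $S_i$ is contained in a proper special subvariety, and this contradicts the Hodge genericity of $a_i$ unless $\bH_i = \bG_i$. Hence $a_i$ is $\delta$-Hodge generic, and the transcendence computation in the proof of Proposition~\ref{minimaltype} gives $\trdeg_K K\langle a_i\rangle = \dim \bG_i$.

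\emph{Main argument for $(\Rightarrow)$.} Dependence now yields $\trdeg_K K\langle a_1,a_2\rangle < \dim \bG_1 + \dim \bG_2 = \dim(\bG_1 \times \bG_2)$. Running the Ax--Schanuel-with-derivatives argument of the proof of Proposition~\ref{minimaltype} for the Shimura variety $S_1 \times S_2$ — Ax--Schanuel with derivatives holds for products of Shimura varieties by~\cite[Theorem 12.3]{MPT} — this forces $\bH := \delta\text{-}\mathbf{M}\mathbf{T}(a_1,a_2)$ to be a proper semisimple $\QQ$-subgroup of $\bG_1 \times \bG_2$. Projecting through $p_i$ the orbit data furnished by Lemma~\ref{H-orbit-eq} shows $a_i \in \Xi_{S_i, p_i(\bH)}(\cM)$, whence $\bG_i = \delta\text{-}\mathbf{M}\mathbf{T}(a_i) \leq p_i(\bH)$, i.e. $\bH$ surjects onto each $\bG_i$. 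Because $S_i$ is irreducible, $\bG_i$ is $\QQ$-simple; $\bH \cap \bG_i$ is a normal subgroup of $\bG_i$ (using surjectivity of $p_{3-i}|_{\bH}$) and cannot be all of $\bG_i$ (else $\bH = \bG_1 \times \bG_2$), so $\bH \cap \bG_i = 1$, and Goursat's lemma identifies $\bH$ with the graph of a $\QQ$-isomorphism $\bG_1 \xrightarrow{\ \sim\ } \bG_2$ — in particular $\dim S_1 = \dim S_2$. This $\bH$ is also the Mumford--Tate group of $(a_1,a_2)$ (it contains that group, which surjects onto each $\bG_i$ and hence has the same dimension), so it is $\QQ$-rational and, together with the orbit through a lift of $(a_1,a_2)$ supplied by Lemma~\ref{H-orbit-eq}, defines a Shimura subdatum whose image $T \subseteq S_1 \times S_2$ is a special subvariety with $(a_1,a_2) \in T(L)$; since $\bH \to \bG_i$ is an isomorphism, each $T \to S_i$ is finite and dominant.

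\emph{Anticipated obstacle.} The delicate step is the transcendence argument: deducing that $\delta\text{-}\mathbf{M}\mathbf{T}(a_1,a_2)$ is a proper subgroup from the failure of algebraic independence. This requires re-running the Morley-sequence and Ax--Schanuel-with-derivatives argument of Proposition~\ref{minimaltype} in the \emph{reducible} variety $S_1 \times S_2$: the coordinates $a_1,a_2$, although Hodge generic in $S_1$ and $S_2$ respectively, need not form a Hodge generic tuple in a power of $S_1 \times S_2$, so one must combine the usual projection-and-fibre bookkeeping with indiscernibility of the Morley sequence to produce a genuine proper weakly special subvariety through $(a_1,a_2)$. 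A second, subtler point — the one the paper flags as distinguishing this setting from the $j$-line — is controlling the passage between $\delta\text{-}\mathbf{M}\mathbf{T}$ and the classical Mumford--Tate group, which is exactly what promotes the weakly special orbit of Lemma~\ref{H-orbit-eq} to a genuinely \emph{special} subvariety; irreducibility of both $S_1$ and $S_2$ is used decisively there.
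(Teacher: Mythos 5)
Your left-to-right implication contains a real error: you claim that every Hodge generic $a_i$ is automatically $\delta$-Hodge generic, but the paper goes out of its way to say this fails.  Indeed, the remark preceding Proposition~\ref{minimaltype} and the proposition immediately following it are precisely about Hodge generic points $a$ that are \emph{not} $\delta$-Hodge generic (they are the points whose Morley rank exceeds the Lascar rank, and their existence is the ``new phenomenon'' the section is flagging).  Your attempted deduction does not repair this: membership in $\Xi_{\bH_i}$ asserts only that some lift $\widetilde{a}_i$ lies on a translate $h^g \cdot b$ with $g \in \bG(\CC)$ and $b \in \check{D}(\CC)$ \emph{complex} constants, and such an orbit need not descend to a weakly special subvariety of $S_i$ (it is an orbit in $\check{D}$, not a $\bG(\RR)^+$-orbit in $D$).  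So the middle of your argument --- ``this image is a weakly special subvariety through $a_i$, hence by irreducibility lies in a proper special subvariety, contradicting Hodge genericity'' --- breaks at the first step.  Fortunately the only thing you extract from the false claim is $\trdeg_K K\langle a_i\rangle = \dim\bG_i$, and that is already a direct consequence of Proposition~\ref{minimaltype} (minimality of $\tp(a_i/K)$) for any Hodge generic non-algebraic $a_i$, $\delta$-Hodge generic or not.  You should simply cite minimality and delete the $\delta$-Hodge genericity digression.

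A second, related conflation: you phrase the output of the Ax--Schanuel-with-derivatives argument as ``$\delta\text{-}\mathbf{M}\mathbf{T}(a_1,a_2)$ is proper.''  What the Morley-sequence computation (run in $S_1\times S_2$ as the paper does, comparing the upper bound from minimality against the lower bound from~\cite[Theorem 12.3]{MPT}) actually produces is a proper \emph{weakly special} subvariety $T \subseteq S_1\times S_2$ through $(a_1,a_2)$, hence that the ordinary Mumford--Tate/monodromy group of $(a_1,a_2)$ is a proper subgroup $\bH < \bG_1\times\bG_2$.  This is the geometric group, not the differential $\delta$-MT group, and the rest of your argument (surjections onto both factors from Hodge genericity of each coordinate, Goursat's lemma using $\QQ$-simplicity of each $\bG_i$, conclusion that $\bH$ is the graph of an isomorphism) works with $\bH$ interpreted that way.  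That Goursat step is a genuinely nicer endgame than what the paper records: the paper stops at ``$T$ is strongly special and $T\to S_i$ is finite by irreducibility,'' whereas your Goursat argument exhibits $T$ explicitly as the graph of a $\QQ$-isomorphism $\bG_1\cong\bG_2$, showing in particular $\dim S_1=\dim S_2$.  You should also note that you invoke the Morley-sequence computation as a black box (``running the argument of Proposition~\ref{minimaltype}''); the paper carries it out explicitly, establishing inequalities~(\ref{trdeg-dep}), (\ref{Hg2-upper}), (\ref{Hg2-lower}) before concluding, and the key preliminary step (Equation~\ref{trdeg-dep}, reducing $\trdeg_{K_0} K_0(b,c)$ to a \emph{single} $\dim\bG_1$ rather than the naive $\dim\bG_1+\dim\bG_2$) uses the \emph{dependence} hypothesis together with minimality and needs to be set up before the Morley-sequence count begins.
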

\begin{proof}
The right to left implication is immediate as the 
relation $T$ expresses $a_1$ and $a_2$
as being interalgebraic.  We focus on proving the 
left to right implication.

Replacing $K$ by a finitely generated differential 
$\mathbb{C}$-algebra
 over which $\tp(a_1,a_2/K)$ is defined and using 
the Seidenberg embedding theorem, we may assume that
$L = \cM$ and that $K$ is finitely generated.  
Swapping the roles of $a_1$ and $a_2$ if need be, 
we may assume that $\dim \bG_1 \geq \dim \bG_2$.

Write $\pi_i:D_i \to S_i$ for the covering map 
expressing $S_i$ as $S_{\Gamma_i,G_i,M_i}$ 
and fix some 
$\widetilde{a}_i$ with $\pi_i(\widetilde{a}_i) = 
a_i$ for $i = 1$ or $2$.  Let $r > 
 \dim \bG_1$ and set $b  := (a_1,a_2)$ and  
$c := \nabla_r(a_1,a_2)$.   Let $K_0 \subseteq K$ 
be a finitely generated $\mathbb{C}$-algebra
over which the algebraic locus of $(b,c)$ is defined.
Since $\tp(a_1/K)$ and $\tp(a_2/K)$ are minimal, 
we see that $K_0(b,c)$ is algebraic over 
$K_0(a_1,\nabla_r(a_1))$.  Thus, 
\begin{equation}
\label{trdeg-dep}
\trdeg_{K_0} K_0(b,c) = \trdeg_{K_0} K_0(\nabla_r(a_1))
= \dim \bG_1 
\end{equation}
by Proposition~\ref{minimaltype}. 

Let $(b_i,c_i)_{i=0}^\infty$ be a Morley sequence
in $\tp (b,c/K)$.   For each $i \in \mathbb{N}$, take
$(g_{1,i},g_{2,i}) \in G_1 \times G_2$ so that 
$(\pi_1(g_{1,i} \widetilde{a}_1), \pi_2(g_{2,i} \widetilde{a}_2)) = b_i$. 

Exactly as in the proof of Proposition~\ref{minimaltype}
using Equation~\ref{trdeg-dep} 
we compute that 
\begin{equation}
\label{Hg2-upper}
\trdeg_{\CC}\CC(\widetilde{a}_1,\widetilde{a}_2,
b_0, \ldots b_{N-1}, c_0, 
\ldots, c_{N-1}) \leq \trdeg_\CC K_0(\widetilde{a}_1,\widetilde{a}_2) + N \dim \bG_1  \text{ .}
\end{equation}

However, if $(a_1,a_2) \in S_1 \times S_2$ were 
Hodge generic, then $(b_0, \ldots, b_{N-1})$
would be Hodge generic in $(S_1 \times S_2)^N$
which would imply by the main theorem of~\cite{MPT}
that
\begin{equation}
\label{Hg2-lower}
1 + N \dim \bG_1 + N \dim \bG_2 
\leq \trdeg_{\CC}\CC(\widetilde{a}_1,\widetilde{a}_2,
b_0, \ldots b_{N-1}, c_0, 
\ldots, c_{N-1}) 
\end{equation}
 
Inequalities~\ref{Hg2-upper} and~\ref{Hg2-lower} are
inconsistent once $N > \frac{\trdeg_\CC K_0(\widetilde{a}_1,\widetilde{a}_2) - 1}{\dim \bG_2}$.  
Thus, the hypothesis that $(a_1,a_2)$ is Hodge generic
in $S_1 \times S_2$ must be wrong and we find a proper
weakly special variety $T \subseteq S_1 \times S_2$
with $(a_1,a_2) \in T(L)$. Since each of $a_i$ is 
individually Hodge generic in $S_i$ 
(for $i = 1$ or $2$), $T$ is strongly special.  
That the projections 
$T \to S_i$ are finite follows from 
irreducibility of $S_1$ and $S_2$.
\end{proof}

We derive several consequences from 
Lemma~\ref{dependence-lem}.

\begin{cor}
Let $S$ be a connected.   Let $K \subseteq L$ be an
extension of differential fields each with  
with field of constants $\mathbb{C}$ and $a \in S(L)$ an $L$-valued 
point of $S$ which is Hodge generic 
and not algebraic over
$K$.  Then forking defines a trivial 
pregeometry on $\tp(a/K)$. 	
\end{cor}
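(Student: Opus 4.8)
The plan is to deduce triviality of the forking pregeometry from Lemma~\ref{dependence-lem}, which reduces any dependence between Hodge generic, non-constant points to the existence of a strongly special correspondence between the ambient Shimura varieties. Recall that triviality of a pregeometry means: if $b$ depends on a set $A$ over $K$ (with $b, A$ realizing the minimal type $\tp(a/K)$, or more precisely in its non-forking locus), then $b$ depends on some single element of $A$ over $K$. Since $\tp(a/K)$ is minimal by Proposition~\ref{minimaltype}, the pregeometry in question is the one given by algebraic closure restricted to realizations of $\tp(a/K)$, and it suffices to check the ``$2$-transitivity'' form of triviality: if $c_1, c_2, c_3$ are pairwise independent realizations of $\tp(a/K)$, then $c_3$ is independent from $\{c_1, c_2\}$ over $K$.

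First I would reduce to the finitely generated setting via the Seidenberg embedding theorem, taking $L = \cM$ and $K$ finitely generated over $\CC$, exactly as in the proof of Lemma~\ref{dependence-lem}. Next, suppose for contradiction that $c_3$ forks with $\{c_1,c_2\}$ over $K$ while being independent from each of $c_1$ and $c_2$ separately. Since all three realize the minimal type, $c_3 \in \operatorname{acl}(K, c_1, c_2)$. Apply Lemma~\ref{dependence-lem} with $S_1 = S$ and $S_2 = S \times S$ (taking the point $(c_1, c_2) \in (S\times S)(\cM)$, which is Hodge generic in $S \times S$ by the independence of $c_1$ and $c_2$ over $K$ together with Hodge genericity of each): dependence of $c_3$ on $(c_1,c_2)$ over $K$ yields a special subvariety $T \subseteq S \times (S \times S)$ with $(c_3, c_1, c_2) \in T(\cM)$ and with the projection $T \to S$ (onto the $c_3$-coordinate) finite and dominant, and the projection to $S \times S$ also dominant. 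The structure theory of special subvarieties of a product of Shimura varieties forces $T$ to decompose through projections to pairs of coordinates: a strongly special subvariety of $S^{\times 3}$ projecting finitely to the first coordinate and dominantly to each factor must, after analyzing its Mumford--Tate group as a subgroup of $\mathbf{G}^{\times 3}$ that is the graph-type amalgam of its projections, already be pulled back from a strongly special correspondence between the $c_3$-coordinate and \emph{one} of $c_1$, $c_2$ — because the diagonal-type constraints are governed by the simple (adjoint) factors of $\mathbf{G}$ and $S$ being irreducible makes $\mathbf{G}$ adjoint-simple, so each such constraint links exactly two coordinates. This gives a strongly special $T' \subseteq S \times S$ with $(c_3, c_i) \in T'$ finite over each factor for some $i \in \{1,2\}$, hence by the right-to-left direction of Lemma~\ref{dependence-lem} that $c_3$ depends on $c_i$ alone, contradicting pairwise independence.

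The main obstacle I anticipate is the step identifying the special subvariety $T \subseteq S^{\times 3}$ as being pulled back from a pairwise correspondence — this is where the irreducibility (adjoint-simplicity) of the Shimura variety is essential and where one must argue carefully about Mumford--Tate groups of points in products. Concretely, if $\mathbf{H} \leq \mathbf{G}^{\times 3}$ is the group cutting out $T$ (so $\mathbf{H}$ projects onto each $\mathbf{G}$ factor by the finiteness/dominance hypotheses), then by Goursat's lemma applied iteratively to the simple group $\mathbf{G}$, $\mathbf{H}$ is determined by which pairs of coordinates it identifies (up to the relevant isogenies/automorphisms). If $\mathbf{H}$ identified coordinate $1$ with the diagonal of coordinates $2$ and $3$ in a genuinely ``three-way'' fashion, one would need $c_2$ and $c_3$ themselves already interalgebraic, again contradicting independence; so the only surviving possibility is a two-coordinate link, as claimed. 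I would also note the degenerate possibility that $T$ projects finitely to $S \times S$ too, i.e. $(c_1,c_2)$ is already algebraic over $(c_3, K)$; but combined with $c_3 \in \operatorname{acl}(K, c_1, c_2)$ and minimality this still forces a pairwise dependence among $c_1, c_2, c_3$. Once the pairwise reduction is in hand, the conclusion that forking gives a trivial pregeometry on $\tp(a/K)$ is immediate from the standard characterization of trivial pregeometries in terms of pairwise versus joint dependence.
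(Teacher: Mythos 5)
Your route is genuinely different from the paper's, and while its overall structure can likely be made to work, as written it contains a gap in the way you invoke Lemma~\ref{dependence-lem}.

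The paper's proof avoids Goursat-type analysis of special subvarieties of $S^{\times 3}$ entirely. It argues by induction on the length $N$ of a dependent sequence of realizations, arranges things so that the last two realizations $a_N$, $a_{N+1}$ are each nonforking over the base $M := K\langle a_1,\dots,a_{N-1}\rangle$ and are dependent over $M$, and then applies Lemma~\ref{dependence-lem} to that \emph{pair} over the larger base field $M$. The resulting strongly special $T \subseteq S\times S$ is defined over $\CC \subseteq K$, so the interalgebraicity of $a_N$ and $a_{N+1}$ it witnesses descends from $M$ to $K$, producing the desired pairwise dependence. This is precisely the trick that sidesteps any structure theory for special subvarieties of higher powers of $S$: one only ever needs the lemma for pairs, and the fact that special varieties live over $\CC$ does the rest.

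Your approach instead aims at the two-transitive formulation over $K$ directly and applies Lemma~\ref{dependence-lem} with $S_1 = S$ and $S_2 = S\times S$. The lemma as stated requires \emph{both} $S_1$ and $S_2$ to be connected, irreducible, pure Shimura varieties, and $S\times S$ is not irreducible. In particular the step in the lemma's proof that yields ``projections $T\to S_i$ are finite'' uses irreducibility of $S_i$; for $S_2 = S\times S$ this fails, and indeed finiteness of $T\to S\times S$ cannot even hold for dimension reasons. You only actually need the finiteness of $T\to S_1$, which does follow from irreducibility of $S_1 = S$, so in spirit the conclusion you use is recoverable by re-running the Ax–Schanuel computation for the triple rather than by citing the lemma. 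But that makes the Goursat analysis --- which you correctly identify as the crux and only sketch --- load-bearing: one has to argue that a connected normal semi-simple $\QQ$-subgroup $\mathbf H\le \mathbf G^{\times 3}$ with full projections to each factor and (for simple adjoint $\mathbf G$) not all of $\mathbf G^{\times 3}$ must be the pullback of a graph on two coordinates. That argument works (kernel of $\pi_{12}|_{\mathbf H}$ is normal in the third factor, so trivial; then $\mathbf H$ is the graph of a surjection $\mathbf G^2\to\mathbf G$ whose kernel, a normal subgroup of $\mathbf G^2$, must be one of the factors), but it is genuine additional arithmetic-group input that the paper simply avoids. If you fill in the Goursat step and replace the illegitimate citation of the lemma with a direct Ax–Schanuel computation for the triple (or observe that only the finite projection to $S_1$ is needed), your argument becomes a valid alternative proof for the $3$-point case; the paper's inductive formulation over an enlarged base is both shorter and handles all $N$ at once.
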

\begin{proof}
We will show by induction on $N$ that if 
there is a dependence over $K$ on a sequence 
$a_1, \ldots, a_N$ of realizations of 
$\tp(a/K)$, then there is a dependence between 
$a_i$ and $a_j$ for some $i < j \leq N$. 
For $N \leq 2$, this is trivial.  Consider
the inductive case of $N+1$.    
If $\{a_1, \ldots, a_N\}$ or $\{a_1, \ldots,
a_{N-1},a_{N+1} \}$ are dependent, then by 
induction we already find a pairwise dependence. 
If both of these sequences are independent, 
then let $M := K \langle a_1, 
\ldots, a_{N-1} \rangle$ be the differential 
field generated by $a_1, \ldots, a_{N-1}$ over 
$K$.  In this case, each of $\tp(a_N/M)$ 
and $\tp(a_{N+1}/M)$ is the nonforking 
extension of $\tp(a/K)$ to $M$ and $a_N$ and 
$a_{N+1}$ are dependent over $M$.  By 
Lemma~\ref{dependence-lem}, $(a_N,a_{N+1})$ 
lies on a proper special subvariety of $S \times S$, 
so that this pair is dependent over $K$. 
\end{proof}

As a more direct consequence of 
Lemma~\ref{dependence-lem}, we see that nonorthogonality
comes only from Hecke correspondences.

\begin{cor}
Let $S_1$ and $S_2$ be connected, 
irreducible, pure Shimura varieties.  
Let $K \subseteq L$ be an
extension of differential fields each with  
with field of constants $\mathbb{C}$ and 
$a_i \in S_i(L)$ for $i = 1$ or $2$ be Hodge 
generic points each of which is not algebraic 
over $K$. Then $\tp(a_1/K) \not \perp 
\tp(a_2/K)$ if and only if there is a Shimura
variety $T$, finite maps of Shimura varieties
$\nu_i:T \to S_i$, and a point $b \in T(L^\text{alg})$
for which $\tp(\nu_i(b)/K) = \tp(a_i/K)$ for
$i = 1$ and $2$.	
\end{cor}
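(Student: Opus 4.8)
The plan is to reduce the corollary to Lemma~\ref{dependence-lem} --- which already treats the case of two types admitting a pair of \emph{dependent} realizations over the base field --- together with routine facts about nonorthogonality of minimal types in $\operatorname{DCF}_0$. Throughout, ``algebraic over $F$'' means ``in $\operatorname{acl}(F)$'', which for a differential field $F$ is the field-theoretic algebraic closure of the differential field it generates, carrying the unique extension of the derivation; such an algebraic extension has the same constant field (the minimal polynomial of an algebraic constant must have constant coefficients), so $\operatorname{acl}(K)$ and $L^{\mathrm{alg}}$ again have constant field $\CC$. I would first dispose of the implication $(\Leftarrow)$: given $T,\nu_1,\nu_2,b$ as in the statement, the $\nu_i$ are finite and defined over $\CC\subseteq K$, so $b$ is interalgebraic over $K$ with each $\nu_i(b)$, and hence $\nu_1(b)$ and $\nu_2(b)$ are interalgebraic over $K$; since $\tp(\nu_i(b)/K)=\tp(a_i/K)$ and $a_i\notin\operatorname{acl}(K)$ we get $\nu_2(b)\in\operatorname{acl}(K\nu_1(b))\smallsetminus\operatorname{acl}(K)$, so these points are dependent over $K$ while realizing $\tp(a_1/K)$ and $\tp(a_2/K)$ respectively; thus the two types are not almost orthogonal over $K$, hence nonorthogonal.

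For the main implication $(\Rightarrow)$, suppose $\tp(a_1/K)\not\perp\tp(a_2/K)$. By Proposition~\ref{minimaltype} both types are minimal. Over the algebraically closed set $\operatorname{acl}(K)$ the types $\tp(a_i/\operatorname{acl}(K))$ are stationary; using standard properties of nonorthogonality (insensitivity to nonforking extensions and conjugacy of stationarizations under $\operatorname{Aut}(\operatorname{acl}(K)/K)$) I may fix a nonforking extension $\widetilde q$ of $\tp(a_2/K)$ to $\operatorname{acl}(K)$ with $\tp(a_1/\operatorname{acl}(K))\not\perp\widetilde q$. Now I would invoke the standard fact that two nonorthogonal minimal types over an algebraically closed base admit interalgebraic realizations: applied to the realization $a_1$ of $\tp(a_1/\operatorname{acl}(K))$, this produces $c_2\models\widetilde q$ with $c_2$ interalgebraic with $a_1$ over $\operatorname{acl}(K)$, so $c_2\in\operatorname{acl}(\operatorname{acl}(K)\cup\{a_1\})=\operatorname{acl}(K\cup\{a_1\})\subseteq L^{\mathrm{alg}}$. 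Then $\tp(c_2/K)=\tp(a_2/K)$, so $c_2$ is Hodge generic and not algebraic over $K$, and $c_2\in\operatorname{acl}(Ka_1)\smallsetminus\operatorname{acl}(K)$ shows that $a_1$ and $c_2$ are dependent over $K$. Applying Lemma~\ref{dependence-lem} to $a_1\in S_1(L^{\mathrm{alg}})$ and $c_2\in S_2(L^{\mathrm{alg}})$ over $K$ then yields a strongly special subvariety $T_0\subseteq S_1\times S_2$ with $(a_1,c_2)\in T_0(L^{\mathrm{alg}})$ whose two projections to $S_1$ and $S_2$ are finite and dominant.

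It remains to recognize $T_0$ as a Hecke correspondence. The strongly special subvariety $T_0$ is the image of a map of quotient spaces $\rho\colon T:=S_{\Gamma',G',M'}\to S_1\times S_2$ with $T$ an arithmetic quotient, hence a connected Shimura variety; when the two projections of $T_0$ onto the factors are finite and dominant, the compositions $\nu_i:=\operatorname{pr}_i\circ\rho\colon T\to S_i$ are finite and dominant maps of Shimura varieties, and $\rho\colon T\to T_0$ is itself finite, since it factors the finite (hence quasi-finite, proper) map $\nu_1$ through the separated map $\operatorname{pr}_1|_{T_0}$. Lifting $(a_1,c_2)$ along the finite surjection $\rho$ to a point $b\in T(L^{\mathrm{alg}})$ gives $\nu_1(b)=a_1$ and $\nu_2(b)=c_2$, whence $\tp(\nu_1(b)/K)=\tp(a_1/K)$ and $\tp(\nu_2(b)/K)=\tp(c_2/K)=\tp(a_2/K)$, exactly as required.

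The main obstacle I anticipate is the first move in $(\Rightarrow)$: extracting from abstract nonorthogonality a genuinely \emph{dependent} pair of realizations that still lies over a differential field whose field of constants is exactly $\CC$ --- this is precisely why I work over $\operatorname{acl}(K)$, which keeps the constant field equal to $\CC$ and, better, keeps the whole configuration inside $L^{\mathrm{alg}}$, rather than passing to a saturated model whose constants could a priori be larger. The only other point needing care is the identification of a strongly special subvariety of $S_1\times S_2$ with finite dominant projections as (the image of) a Hecke correspondence between Shimura varieties; this is routine from the definitions recalled in Section~\ref{conventions} but should be written out.
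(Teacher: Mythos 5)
Your overall strategy coincides with the paper's: reduce both directions to Lemma~\ref{dependence-lem}, and in the forward direction translate the strongly special subvariety with finite projections into a Hecke correspondence.  The $(\Leftarrow)$ direction and the geometric recognition of the correspondence are fine.  However, the first move in $(\Rightarrow)$ rests on a false ``standard fact''.

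You assert that two nonorthogonal minimal types over an $\operatorname{acl}$-closed base always admit interalgebraic realizations over that base (and even that, given a realization $a_1$ of one type, one can find an interalgebraic $c_2$ realizing the other).  This is false over bases that are not models.  A counterexample inside $\operatorname{DCF}_0$ itself: take $p$ to be the generic type of the constants over $\overline{\QQ}$ and $q$ the generic type of $\{y : y'=y\}$ over $\overline{\QQ}$.  These are both minimal and they are nonorthogonal, since over a realization $e_0$ of $q$ the map $e \mapsto e/e_0$ carries $\{y:y'=y\}$ into the constants and any two realizations $e_0$, $e_1$ of $q$ give the transcendental constant $e_1/e_0$.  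Yet over $\overline{\QQ}$ they are \emph{almost} orthogonal: if $e'=e$ and $e$ is transcendental then the constants of $\overline{\overline{\QQ}(e)}$ are just $\overline{\QQ}$ (differentiate the minimal polynomial of a would-be constant), so no transcendental constant is algebraic over $\overline{\QQ}(e)$.  So nonorthogonal minimal types over an algebraically closed base need not have dependent realizations over that base, and the step that produces $c_2 \in \operatorname{acl}(K a_1)$ directly cannot be taken.  (The fact you want requires the base to contain a model, which $\operatorname{acl}(K)$ in $\operatorname{DCF}_0$ never is.)

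The repair is to do what the paper does: by the definition of nonorthogonality find a differential field $M \supseteq K$, with constants $\CC$ (possible by the Seidenberg embedding mechanism), together with $c_i \models \tp(a_i/K)|M$ which are dependent over $M$, and apply Lemma~\ref{dependence-lem} over $M$ to produce the strongly special $T_0 \subseteq S_1 \times S_2$ with both projections finite and dominant and $(c_1,c_2) \in T_0$.  Because $T_0$ is defined over $\CC \subseteq K$ and $\tp(c_1/K) = \tp(a_1/K)$, an automorphism over $K$ sending $c_1 \mapsto a_1$ carries $c_2$ to some $c_2'$ with $(a_1,c_2') \in T_0$, $\tp(c_2'/K) = \tp(a_2/K)$, and $c_2' \in \operatorname{acl}(K a_1) \subseteq L^{\text{alg}}$ since the first projection of $T_0$ is finite.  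Your final paragraph, identifying $T_0$ as the image of a finite map $\rho:T \to S_1 \times S_2$ from a Shimura variety $T$ with both compositions $\nu_i := \operatorname{pr}_i \circ \rho$ finite and dominant and lifting $(a_1,c_2')$ to $b \in T(L^{\text{alg}})$, then goes through unchanged.  With this replacement of the first step, your argument matches the paper's (which omits both the $(\Leftarrow)$ direction and the conjugation step but is otherwise the same reduction to Lemma~\ref{dependence-lem}).
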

\begin{proof}
If $\tp(a_1/K) \not \perp \tp(a_2/K)$, then we
can find some extension $M$ of $K$ and points 
$c_1$ and $c_2$ with $\tp(c_i/M)$ being the 
nonforking extension of $\tp(a_i/K)$ to 
$M$ (for $i = 1$ and $2$) and $c_1$ and $c_2$
are dependent over $M$.  By Lemma~\ref{dependence-lem},
there is some proper special 
$T \subseteq S_1 \times S_2$ with $(c_1,c_2) \in T$
and $T \to S_i$ finite for $i = 1$ and $2$.  
Set $b := (c_1,c_2)$. 	
\end{proof}

\bibliographystyle{siam}
\bibliography{ZP}

 \end{document}